\documentclass[12pt]{amsart}

\usepackage{a4wide}

\usepackage{amsmath}
\usepackage{amssymb}
\usepackage{amsfonts}

\newtheorem{thm}{Theorem}[section]
\newtheorem{proposition}[thm]{Proposition}
\newtheorem{cor}[thm]{Corollary}
\newtheorem{lem}[thm]{Lemma}
\newtheorem{dfn}[thm]{Definition}
\newtheorem{fct}[thm]{Fact}

\newtheorem{rmk}[thm]{Remark}

\newtheorem{exa}[thm]{Example}

\parskip=0mm
\oddsidemargin=0cm\evensidemargin=0cm\textwidth=16cm\textheight
=22cm\headheight=0cm\topskip=0cm\topmargin=0cm

\newcommand{\M}{\sf M}


\def\dotminus{\mathbin{\ooalign{\hss\raise1ex\hbox{.}\hss\cr
  \mathsurround=0pt$-$}}}


\def\Ind#1#2{#1\setbox0=\hbox{$#1x$}\kern\wd0\hbox to 0pt{\hss$#1\mid$\hss}
\lower.9\ht0\hbox to 0pt{\hss$#1\smile$\hss}\kern\wd0}

\def\notind#1#2{#1\setbox0=\hbox{$#1x$}\kern\wd0\hbox to 0pt{\mathchardef
\nn=12854\hss$#1\nn$\kern1.4\wd0\hss}\hbox to
0pt{\hss$#1\mid$\hss}\lower.9\ht0 \hbox to
0pt{\hss$#1\smile$\hss}\kern\wd0}

\begin{document}
\def\dis{\displaystyle}

\begin{center}
{ {\sc  Stability, NIP, and NSOP; Model Theoretic Properties of
Formulas \\ via Topological Properties of Function Spaces }}
\vspace{10mm}

{ {\sc Karim Khanaki}} \vspace{3mm}

{\footnotesize
  Department of science, Arak University of Technology,
 \\
P.O. Box 38135-1177, Arak, Iran; e-mail: khanaki@arakut.ac.ir
\\ \bigskip School of Mathematics,
Institute for Research in Fundamental Sciences (IPM), \\ P.O. Box
19395-5746, Tehran, Iran;
 e-mail: khanaki@ipm.ir} \vspace{5mm}
\end{center}

{\sc Abstract.}
{\small    We study and characterize  stability, NIP and NSOP in
terms of topological and measure theoretical properties of
classes of functions. We study a measure theoretic property,
`Talagrand's stability', and explain the relationship between this
property and NIP in continuous logic. Using a result of Bourgain,
Fremlin and Talagrand, we prove the `almost definability'  and
`Baire~1 definability' of coheirs assuming NIP. We show that a
formula $\phi(x,y)$ has the strict order property if and only if
there is a convergent sequence of continuous functions on the
space of $\phi$-types such that its limit is not continuous. We
deduce from this  a theorem of Shelah and point out the
correspondence between this theorem and the Eberlein-\v{S}mulian
theorem.}

\medskip

{\small{\sc Keywords}: Talagrand's stability, 
independence property, coheir, strict order property,
continuous~logic, relative weak compactness, angelic space.}

AMS subject classification: 03C45, 03C90, 46E15, 46A50

\noindent\hrulefill

{\small \tableofcontents


\noindent\hrulefill

\newpage

\section{Introduction} \label{1}
In \cite{Ros} Rosenthal introduced the independence property for
families of real-valued functions and used this property for
proving his celebrated $l^1$ theorem: a Banach space is either
`good' (every bounded sequence has a weak-Cauchy subsequence) or
`bad' (contains an isomorphic copy of $l^1$). After this and
another work of Rosenthal \cite{Ros2},
 Bourgain, Fremlin and Talagrand \cite{BFT} found some
topological and measure theoretical criteria for the independence
property and proved that the space of functions of the first
Baire class on a Polish space is angelic; a topological notion for
which the terminology was introduced by Fremlin. This theorem
asserts that a set of continuous functions on a Polish space is
either `good' (its closure is precisely the set of limits of its
sequences) or `bad' (its closure contains non-measurable
functions). In fact these dichotomies correspond to the NIP/IP
dichotomy in continuous logic; see Fact~\ref{BFT} below.

In this paper we propose a generalization of Shelah's dividing
lines for classification of first order theories which deals with
real-valued formulas instead of 0-1~valued formulas. The
principal aim of this paper is to study and characterize some
model theoretic properties of formulas, such as OP, IP and SOP, in
terms of topological and measure theoretical properties of
function spaces. This study enables us to obtain new results and
to reach a better understanding of the known results.

Let us give the background and our own point of view. In Shelah's
stability theory, the set-theoretic criteria lead to ranks or
combinatorial properties of a particular formula. There are known
interactions between some of these combinatorial properties and
some topological properties of function spaces. As an example, a
formula $\phi(x,y)$ has the order property (OP) if there exist
$a_ib_i,i<\omega$ such that $\phi(a_i,b_j)$ holds if and only if
$i<j$. One can assume that $\phi$ is a 0-1 valued function, such
that $\phi(a,b)=1$ iff $\phi(a,b)$ holds. Then $\phi$ has the
order property iff there exist $a_i,b_j$ such that
$\lim_i\lim_j\phi(a_i,b_j)=1\neq 0=\lim_j\lim_i\phi(a_i,b_j)$.
Thus failure of the order property, or stability, is equivalent
to the requirement that the double limits $\lim_i\lim_j\phi$ and
$\lim_j\lim_i\phi$ be the same.  Using a crucial result due to
Eberlein and Grothendieck, the latter is a topological property
of a family of functions; see Fact~\ref{Fact2} below. Similarly,
using the result of Bourgain, Fremlin and Talagrand mentioned
above, one can obtain some topological and measure theoretical
characterizations of NIP formulas. Therefore, it seems reasonable
that one studies real-valued formulas and hopes to obtain new
classes of functions (formulas) and develop a sharper stability
theory by making use of topological properties of function spaces
instead of only combinatorial properties of formulas.

In this paper  (except in Section 4) we work in continuous logic
which is an extension of classical first order logic; thus our
results hold in the latter case.


The following is a summary of the main results of this paper:
 Propositions \ref{SCP->NSOP}, \ref{NSOP=SCP}, \ref{Shelah=Eberlein}, Theorems
\ref{NIP-compactness}  and \ref{almost-dfn} are new results. Also,
Definitions \ref{NIP-formula} and \ref{universal dfn}   are new.
Propositions \ref{NIP-almost}, \ref{NIP-dfn}, \ref{Keisler-NIP}
and Theorem \ref{Baire-dfn} have not previously been published
but are essentially just translations from functional analysis.
Section~3 focuses on  NIP in the framework of continuous logic
and Section~4 focuses on SOP in classical model theory.


This is not the end of the story if one defines a notion of
non-forking extension in NIP theories such that it satisfies
symmetry and transitivity. Moreover, one can study sensitive
families of functions, dynamical systems and chaotic maps and
their connections with stability theory. We will study them in a
future work.

It is worth recalling another line of research. After the
preparation of the first version of this paper, we came to know
that simultaneously in \cite{Iba14} and \cite{S2} the
relationship between NIP and Rosenthal's dichotomy was noticed in
the contexts of $\aleph_0$-categorical structures in continuous
logic and classical first order setting, respectively.
Independently,  the relationship between NIP in integral logic and
Talagrand's stability was studied in \cite{K}.

This paper is organized as follows: In the second section, we
briefly review continuous logic and stability. In the third
section, we study Talagrand's stability and its relationship with
NIP in logic, and give some characterizations of NIP in terms of
measure and topology. The result of Bourgain, Fremlin and
Talagrand  is  used in this section for proving of definability of
coheirs in NIP theories. In the fourth section, we study the SOP
and point out the correspondence between Shelah's theorem and the
Eberlein-\v{S}mulian theorem.

\bigskip\noindent
{\bf Acknowledgements.} I am very much indebted to Professor
David H. Fremlin for his kindness and his helpful comments. I am
grateful to M\'{a}rton Elekes for valuable comments and
observations, particularly Example~\ref{exa} below. I thank the
anonymous referees for their detailed suggestions and
corrections; they helped to improve significantly the exposition
of this paper.

 I would like to thank the Institute for Basic Sciences (IPM),
Tehran, Iran. Research partially supported by IPM grant 93030032.}

\noindent\hrulefill

\section{Continuous Logic} \label{2}
In this section we give a brief review of continuous logic from
\cite{BU} and \cite{BBHU}. Results stated without proof can be
found there. The reader who is familiar with continuous logic can
skip this section.

\subsection{Syntax and semantics} \label{syntax}
A {\em language} is a set $L$ consisting of constant symbols and
function/relation symbols of various arities. To   each relation
symbol $R$ is assigned a bound $\flat_R\in[0,\infty)$ and we
assume that its interpretations is bounded by $\flat_R$. It is
always assumed that $L$ contains the metric symbol $d$ and
$\flat_d=1$. We use $\mathbb{R}$ as value space and its common
operations $+,\times$ and scalar products as connectives.
Moreover to each relation symbol $R$ (function symbol $F$) is
assigned a modulus of uniform continuity $\Delta_R$ ($\Delta_F$).
We also use the symbols `sup' and `inf' as quantifiers.

Let $L$ be a language. {\em $L$-terms} and their bound are
inductively define as follows:

\begin{itemize}
  \item Constant symbols and variables are terms.
  \item If $F$ is a $n$-ary function symbol and $t_1,
\ldots,t_n$ are terms, then $F(t_1,\ldots, t_n)$ is a term.

All $L$-terms are constructed in this way.

\end{itemize}

\begin{dfn} $L$-formulas and their  bounds are inductively defined as follows:
\begin{itemize}
  \item Every $r\in\mathbb{R}$ is an atomic formula with
bound $|r|$.
  \item If $R$ is a $n$-ary relation symbol and $t_1,\ldots,t_n$ are terms,
$R(t_1,\ldots,t_n)$ is an atomic formula with bound $\flat_R$.
\item If $\phi,\psi$ are formula and $r\in\mathbb{R}$ then
$\phi+\psi,\phi\times\psi$ and $r\phi$ are formulas  with bound
resp $\flat_\phi+\flat_\psi, \flat_\phi\flat_\psi, |r|\flat_\phi$.
\item If $\phi$ is a formula and $x$ is a variable, $\sup_x\phi$ and $\inf_x\phi$ are formulas with the same
 bound as $\phi$.
\end{itemize}
\end{dfn}

\begin{dfn}  A {\em prestructure} in $L$ is pseudo-metric space $(M, d)$ equipped with:
\begin{itemize}
  \item for each constant symbol $c\in L$, an element $c^M\in M$
  \item for each $n$-ary function symbol $F$ a function $F^M : M^n\to M$ such that
 $$d_n^M(\bar x,\bar y)\leqslant\Delta_F(\epsilon) \Longrightarrow d^M(F^M(\bar x),F^M(\bar y))\leqslant\epsilon$$
  \item for each $n$-ary relation symbol $R$ a function  $R^M : M^n\to [-\flat_R,\flat_R]$ such that
   $$d_n^M(\bar x,\bar y)\leqslant\Delta_R(\epsilon) \Longrightarrow |R^M(\bar x)-R^M(\bar
y)|\leqslant\epsilon.$$
\end{itemize}
\end{dfn}

If $M$ is a prestructure, for each formula $\phi(\bar x)$ and
$\bar a\in M$, $\phi^M(\bar a)$ is defined inductively starting
from atomic formulas. In particular, $(\sup_y \phi)^M(\bar
a)=\sup_{b\in M}\phi^M(\bar a, b)$. Similarly for $\inf_y\phi$.

\begin{proposition} Let $M$ be an $L$-prestructure and $\phi(\bar x)$ a formula with $|\bar x|=n$. Then
$\phi^M(\bar x)$  is a  real-valued function on $M^n$ with a
modulus of uniform continuity $\Delta_\phi$ and $|\phi^M(\bar
a)|\leqslant\flat_\phi$ for every $\bar a$.
\end{proposition}

Interesting prestructures are those which are {\em complete}
metric spaces. They are called {\em $L$-structures}. Every
prestructure can be easily transformed to a complete
$L$-structure by first taking the quotient metric and then
completing the resulting metric space. By uniform continuity,
interpretations of function and relation symbols induce
well-defined function and relations on the resulting metric space.

\subsection{Compactness, types, stability} \label{compactness}
Let $L$ be a  language. An expression of the form
$\phi\leqslant\psi$, where $\phi,\psi$ are formulas, is called a
{\em condition}. The equality $\phi=\psi$  is called a condition
again. These conditions are called closed if $\phi,\psi$ are
sentences. A {\em theory} is a set of closed conditions. The
notion $M\models T$ is defined in the obvious way. $M$ is then
called a model of $T$. A theory is {\em satisfiable} if has a
model.

An ultraproduct construction can be defined. The most important
application of this construction in logic is to prove the \L
o\'{s} theorem and to deduce the compactness theorem.

\begin{thm}[Compactness Theorem] Let $T$ be an $L$-theory and $\mathcal{C}$ a class of
$L$-structures. Suppose that $T$ is finitely satisfiable in
$\mathcal{C}$. Then there exists an ultraproduct of structures
from $\mathcal{C}$ that is a model of $T$.
\end{thm}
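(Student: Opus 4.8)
The plan is to run the standard ultraproduct argument sketched just before the statement, whose substance is a continuous-logic version of \L{}o\'{s}'s theorem. Let $I$ be the set of all finite subsets of $T$. Since $T$ is finitely satisfiable in $\mathcal{C}$, for each $i\in I$ fix $M_i\in\mathcal{C}$ with $M_i\models i$. For $i_0\in I$ set $\widehat{i_0}=\{\,i\in I: i_0\subseteq i\,\}$; the family $\{\widehat{i_0}:i_0\in I\}$ has the finite intersection property, so fix an ultrafilter $\mathcal{U}$ on $I$ containing every $\widehat{i_0}$. Form $M=\prod_{\mathcal{U}}M_i$: on $\prod_i M_i$ put $\rho\big((a_i),(b_i)\big)=\lim_{\mathcal{U}}d^{M_i}(a_i,b_i)$, a limit that exists in $[0,1]$ because $\mathcal{U}$ is an ultrafilter and $d$ is bounded by $1$; this is a pseudometric, and $M$ is the associated metric space (it is complete, since the $M_i$ are; or pass to its completion, which changes nothing below). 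Interpret each constant symbol by the $\rho$-class of the corresponding constant tuple and each function/relation symbol by the coordinatewise $\mathcal{U}$-limit of its interpretations in the $M_i$; the moduli $\Delta_F,\Delta_R$ ensure these are well defined on $\rho$-classes and carry the required moduli of uniform continuity, so $M$ is a genuine $L$-structure.

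Next I would prove \L{}o\'{s}'s theorem: for every formula $\phi(\bar x)$ and every tuple $[\bar a]$ in $M$ with representatives $\bar a_i\in M_i$,
\[
\phi^{M}([\bar a])=\lim_{\mathcal{U}}\phi^{M_i}(\bar a_i).
\]
This goes by induction on $\phi$: atomic formulas hold by construction; the connectives (addition, multiplication, scalar multiples) are continuous, hence commute with $\mathcal{U}$-limits; the one case with content is a quantifier, say $\sup_y\phi$. For ``$\leqslant$'', any element $[b]$ of $M$ satisfies $\phi^M([\bar a],[b])=\lim_{\mathcal{U}}\phi^{M_i}(\bar a_i,b_i)\leqslant\lim_{\mathcal{U}}(\sup_y\phi)^{M_i}(\bar a_i)$ by the induction hypothesis, so the supremum over $[b]$ is at most the right-hand side. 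For ``$\geqslant$'', given $\varepsilon>0$ pick $b_i\in M_i$ with $\phi^{M_i}(\bar a_i,b_i)\geqslant(\sup_y\phi)^{M_i}(\bar a_i)-\varepsilon$ for every $i$; then the class $[(b_i)_i]$ witnesses $(\sup_y\phi)^M([\bar a])\geqslant\lim_{\mathcal{U}}(\sup_y\phi)^{M_i}(\bar a_i)-\varepsilon$, and $\varepsilon\to 0$ yields the reverse inequality. (If the completion was taken, extend the identity from the dense part by uniform continuity.) I expect this quantifier step, together with the routine bookkeeping that all interpretations descend correctly to the metric quotient, to be the only real obstacle; everything else is formal.

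Finally, to see $M\models T$, take a closed condition $\phi\leqslant\psi$ in $T$, so $\phi,\psi$ are sentences. For every $i\in\widehat{\{\phi\leqslant\psi\}}\in\mathcal{U}$ we have $\phi^{M_i}\leqslant\psi^{M_i}$, and since this holds on a set in $\mathcal{U}$ the $\mathcal{U}$-limits obey $\lim_{\mathcal{U}}\phi^{M_i}\leqslant\lim_{\mathcal{U}}\psi^{M_i}$; by \L{}o\'{s}'s theorem this reads $\phi^M\leqslant\psi^M$, i.e.\ $M\models\phi\leqslant\psi$. The argument for a condition $\phi=\psi$ is identical. Hence $M$ is a model of $T$ which, by construction, is an ultraproduct of structures from $\mathcal{C}$.
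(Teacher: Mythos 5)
Your proof is correct and is exactly the standard argument the paper alludes to (and delegates to [BU]/[BBHU]): index by finite subtheories, choose an ultrafilter containing the sets $\widehat{i_0}$, build the metric ultraproduct, and verify the continuous-logic \L{}o\'{s} theorem, whose only nontrivial step is the $\sup$ quantifier handled as you do. No discrepancies with the paper's (cited) approach.
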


There are intrinsic  connections between some concepts from
functional analysis and  continuous logic. For example, types are
well known mathematical objects, {\em Riesz homomorphisms}. To
illustrate this, there are two options; Gelfand representation of
$C^*$-algebras, and Kakutani representation of $M$-spaces. We work
in a real-valued logic, so we use the latter.

Suppose that $L$ is an arbitrary language.  Let $M$ be an
$L$-structure, $A\subseteq M$ and $T_A=Th({M}, a)_{a\in A}$. Let
$p(x)$ be a set of $L(A)$-conditions in free variable $x$. We
shall say that $p(x)$ is a {\em type  over} $A$ if $p(x)\cup T_A$
is satisfiable. A {\em complete type over} $A$ is a maximal type
over $A$. The collection of all such types over $A$ is denoted by
$S^{M}(A)$, or simply by $S(A)$ if the context makes the theory
$T_A$ clear. The {\em type of $a$ in $M$ over $A$}, denoted by
$\text{tp}^{M}(a/A)$, is the set of all $L(A)$-conditions
satisfied in $M$ by $a$. If $\phi(x,y)$ is a formula, a {\em
$\phi$-type} over $M$ is a maximal consistent set of formulas of
the form $\phi(x,a)\geqslant r$, for $a\in M$ and
$r\in\mathbb{R}$. The set of $\phi$-types over $M$ is denoted by
$S_\phi(M)$. The definition of a $\phi$-type over a set $A$ which
is not a model needs a few more steps (see Definition~6.6 in
\cite{BU}).

We now give a characterization of complete types in terms of
functional analysis. Let $\mathcal{L}_A$ be the family of all
interpretations $\phi^{M}$ in $M$ where $\phi$ is an
$L(A)$-formula with a free variable $x$. Then $\mathcal{L}_A$ is
an Archimedean Riesz space of measurable functions on $M$ (see
\cite{Fremlin3}). Let $\sigma_A({M})$ be the set of Riesz
homomorphisms $I: {\mathcal L}_A\to \mathbb{R}$ such that
$I(\textbf{1}) = 1$, where $\textbf{1}$ is the constant $1$
function on $M$. The set $\sigma_A({M})$ is   called the {\em
spectrum} of $T_A$. Note that $\sigma_A({M})$ is a weak* compact
subset of the dual space $\mathcal{L}_A^*$  of $\mathcal{L}_A$.
The next proposition shows that a complete type can be coded by a
Riesz homomorphism and gives a characterization of complete
types. In fact, by the  Kakutani representation theorem, the map
$S^{M}(A)\to\sigma_A({M})$, defined by $p\mapsto I_p$ where
$I_p(\phi^M)=r$ if $\phi(x) = r$ is in $p$, is a bijection.  By
adapting the proof of Proposition~5.6 of \cite{K}, one can show
that:

\begin{proposition} \label{key}
Suppose that $M$, $A$ and $T_A$ are as above.
\begin{itemize}
             \item [{\em (i)}] The map $S^{M}(A)\to\sigma_A({M})$  defined by $p\mapsto I_p$ is bijective.
             \item [{\em (ii)}] A set $p$ of $L(A)$-conditions is an element of $S^{M}(A)$ if and only if there is an elementary
extension $N$ of $M$ and $a\in N$ such that
$p=\text{tp}^{N}(a/A)$.
\end{itemize}
\end{proposition}

We equip $S^{M}(A)=\sigma_A({M})$  with the related topology
induced from $\mathcal{L}_A^*$. Therefore, $S^{M}(A)$ is a compact
and Hausdorff space. For any complete type $p$ and formula
$\phi$, we let $\phi(p)=I_p(\phi^{M})$. It is easy to verify that
the topology on $S^{M}(A)$ is the weakest topology in which all
the functions $p\mapsto \phi(p)$ are continuous. This topology is
sometimes called the {\em logic topology}. The same things are
true for $S_\phi(M)$.

\begin{dfn} 
\label{stab2}  A formula $\phi(x,y)$ is called {\em stable in a
structure} $M$ if there are no $\epsilon>0$ and infinite sequences
$a_n, b_n \in M$ such that for all $i<j$: $|\phi(a_i,b_j) -
\phi(a_j,b_i)| \geqslant \epsilon$.  A formula $\phi$ is {\em
stable in a theory} $T$ if it is stable in every model of $T$. If
$\phi$ is not stable in $M$ we say that it has the {\em order
property} (or short the OP). Similarly, $\phi$ has the OP in $T$
if it is not stable in some model of $T$.
\end{dfn}

It is easy to verify that $\phi(x,y)$ is stable in $M$ if whenever
$a_n,b_m\in M$ form two sequences we have
$$\lim_n\lim_m\phi(a_n,b_m)=\lim_m\lim_n\phi(a_n,b_m),$$
provided both limits exist.

\begin{lem} \label{stable formula}
  Let $\phi(x,y)$ be a formula.
  Then the following are equivalent:
  \begin{itemize}
  \item [{\em (i)}] The formula $\phi$ is stable.
  \item [{\em (ii)}] There are no distinct real numbers $r,s$ and
    infinite sequence $(a_ib_i\colon i < \omega)$ such that
    $\phi(a_i,b_j)=r$ for $i<j$ and  $\phi(a_i,b_j)=s$ for $i\geq j$.
  \end{itemize}
\end{lem}

By the following result,  stability of a formula $\phi(x,y)$ is
equivalent to the family of functions being  relatively weakly
compact.  In everything that follows, if $X$ is a topological
space then $C_b(X)$ denotes the Banach space of bounded
real-valued functions on $X$, equipped with the supremum norm. A
subset $A\subseteq C_b(X)$ is relatively weakly compact if it has
compact closure in the weak topology on $C_b(X)$. If $X$ is a
compact space, then we write $C(X)$ instead of $C_b(X)$.

\begin{fct}[\cite{Fremlin4}, Proposition~462E] \label{Grothendieck-lemma}
Let $X$ be a  compact topological space, and $A$ a subset of
$C(X)$. Then $A$ is weakly compact in $C(X)$ iff it is
norm-bounded and pointwise compact.
\end{fct}

In \cite{Gro}, Grothendieck says that the following is based on
an idea of Eberlein. (In \cite{Pillay-Grothendieck}, Pillay
correctly pointed out this.)

\begin{fct}[Eberlein-Grothendieck criterion, \cite{Gro}, Th\'{e}or\`{e}me~6]    \label{Criterion} 
 Let $X$ be an arbitrary topological space, $X_0\subseteq X$ a
dense subset. Then the following are equivalent for a subset
$A\subseteq C_b(X)$:
\begin{itemize}
             \item [{\em (i)}]  The set $A$ is relatively weakly compact in $C_b(X)$.
             \item [{\em (ii)}] The set
$A$ is bounded, and for any sequences
$\{f_n\}_{1}^\infty\subseteq A$ and $\{x_n\}_{1}^\infty\subseteq
X_0$, we have $$\lim_n \lim_m f_n(x_m) =\lim_m \lim_n
f_n(x_m),$$  whenever both limits exist.
\end{itemize}
\end{fct}

The following is a model-theoretic version of the
Eberlein-Grothendieck criterion, as pointed  out  by Ben~Yaacov in
\cite{Ben-Gro}  (see Fact 2, the discussion before Theorem 3 and
Theorem 5 therein).

\begin{cor} \label{Fact2} 
Let $M$ be a structure and $\phi(x,y)$ a formula. Then the
following are equivalent:
\begin{itemize}
             \item [{\em (i)}] $\phi(x,y)$ is stable in $M$.
             \item [{\em (ii)}]  The set $A=\{\phi(x,b):S_x(M)\to \mathbb{R}~|b\in M\}$ is relatively weakly compact in $C(S_x(M))$.
\end{itemize}
\end{cor}

\noindent\hrulefill
\section{NIP} \label{4}

In this section we study Talagrand's stability and its
relationship to NIP in continuous logic. Then, we give some
characterizations of NIP in terms of topology and measure, and
deduce various forms of definability of coheirs for NIP models.

\subsection{Independent family of functions}
In \cite{Ros} Rosenthal introduced the independence property for
families of real-valued functions and used it for proving his
dichotomy. As we will see shortly, this notion corresponds to a
generalization of the IP for real-valued formulas.

\begin{dfn}[\cite{GM}, Definition~2.8] \label{NIP-family}  
 A family $F$ of real-valued functions on a set $X$ is said to
be {\em independent}
 (or has the {\em independence property}, short IP) if there
exist real numbers $s<r$ and a sequence $f_n\in F$ such that for
each $k\geqslant 1$ and for each $I\subseteq\{1,\ldots,k\}$,
there is $x\in X$ with $f_i(x)\leqslant s$ for $i\in I$ and
$f_i(x)\geqslant r$ for $i\notin I$. In this case, sometimes we
say that every finite subset of the sequence $f_n$ is shattered by
$X$. If $F$ has not the independence property then we say that it
has the {\em dependent property} (or the NIP).
\end{dfn}

We have the following remarkable topological characterizations of
this property. More details and several equivalent presentations
can be found in \cite{GM}.

\begin{fct}[\cite{GM}, Theorem~2.11] \label{NIP-convergence}
 Let $X$ be a compact space and $F\subseteq C(X)$ a bounded subset.
The following conditions are equivalent:
\begin{itemize}
             \item [{\em (i)}] $F$ does not contain an independent  sequence.
             \item [{\em (ii)}] Each sequence in $F$ has a pointwise convergent subsequence in $\mathbb{R}^X$.
\end{itemize}
\end{fct}


\begin{dfn} \label{RSC}
 We say that a (bounded) family  $F$ of real-valued function on
a set $X$ has the {\em relative sequential compactness in
${\mathbb{R}}^X$} (short RSC) if   every sequence in $F$ has a
pointwise convergent subsequence in ${\mathbb{R}}^X$.
\end{dfn}

As we will see shortly, the following statement is a
generalization of a model theoretic fact, i.e. IP implies OP.

\begin{fct} \label{IP->OP} Let $X$ be a compact space and $F\subseteq C(X)$ a bounded subset.
 If $F$ is relatively weakly compact in $C(X)$, then $F$ has the RSC.
\end{fct}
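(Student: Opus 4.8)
The plan is to deduce this from two standard facts of Banach space theory: that point evaluations on $C(X)$ are bounded linear functionals, so that weak convergence in $C(X)$ forces pointwise convergence; and the Eberlein--\v{S}mulian theorem, which converts relative weak compactness into relative weak \emph{sequential} compactness. So the statement is essentially a reformulation of Eberlein--\v{S}mulian, which is exactly the circle of ideas the paper is heading toward.

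First I would fix an arbitrary sequence $(f_n)_{n<\omega}$ in $F$. Since $F$ is relatively weakly compact in $C(X)$, the Eberlein--\v{S}mulian theorem provides a subsequence $(f_{n_k})_{k<\omega}$ and a function $g\in C(X)$ such that $f_{n_k}\to g$ in the weak topology of $C(X)$. Next, for each $x\in X$ the evaluation map $\delta_x\colon C(X)\to{\Bbb R}$, $\delta_x(h)=h(x)$, is linear and satisfies $|\delta_x(h)|\leqslant\|h\|_\infty$, so $\delta_x\in C(X)^*$. Applying $\delta_x$ to the weakly convergent subsequence gives $f_{n_k}(x)=\delta_x(f_{n_k})\to\delta_x(g)=g(x)$ for every $x\in X$; that is, $(f_{n_k})$ converges to $g$ pointwise, i.e.\ in ${\Bbb R}^X$. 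As $(f_n)$ was arbitrary, $F$ has the RSC. Equivalently, one sees here that the topology of pointwise convergence on $C(X)$ is coarser than the weak topology, so a weakly convergent sequence is automatically pointwise convergent; the only work is in producing the convergent subsequence, which is precisely where Eberlein--\v{S}mulian enters.

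The sole nontrivial ingredient is thus the Eberlein--\v{S}mulian theorem; everything else is immediate. (An alternative route, which also rests on Eberlein--\v{S}mulian, goes through Fact~\ref{NIP-convergence}: a relatively weakly compact $F$ cannot contain a subsequence whose closed linear span in $l^\infty(X)$ is isomorphic to $l^1$, since an $l^1$-basis has no weakly Cauchy subsequence---one can separate any subsequence by an alternating functional extended via Hahn--Banach---whereas in a relatively weakly compact set every sequence has a weakly convergent, hence weakly Cauchy, subsequence; by the implication (iii)$\Rightarrow$(ii) of Fact~\ref{NIP-convergence} this yields the RSC.) I expect no real obstacle: the content of the proof is the observation that this ``Fact'' is an avatar of Eberlein--\v{S}mulian, mirroring on the function-space side the model-theoretic implication IP~$\Rightarrow$~OP.
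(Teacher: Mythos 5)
Your proof is correct. The paper states Fact~\ref{IP->OP} without any proof, treating it as standard, and your argument --- Eberlein--\v{S}mulian to extract a weakly convergent subsequence with limit in $C(X)$, then the point evaluations $\delta_x\in C(X)^*$ to downgrade weak convergence to pointwise convergence --- is exactly the intended justification, matching the circle of ideas the paper itself makes explicit in its later discussion of the Eberlein--\v{S}mulian theorem.
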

\begin{proof}
Suppose that $F$ is relatively weakly compact in $C(X)$.  (Not
that, by Fact~\ref{Grothendieck-lemma} above, the weak topology
and pointwise topology are the same.) By the Eberlein-\v{S}mulian
theorem, each sequence in $F$ has a subsequence converging to an
element of  $C(X)$. So, in particular, $F$ has the RSC.
\end{proof}

\subsection{Talagrand's stability and almost NIP}
Historically, Talagrand's stability (see
Definition~\ref{Talagrand-stable} below), which we call the almost
dependence property, arose naturally when Talagrand and Fremlin
were studying pointwise compact sets of measurable functions;
they found that in many cases a set of functions was relatively
pointwise compact because it was almost dependent (see
Fact~\ref{almost-NIP} below). Later it appeared  that the concept
was connected with Glivenko-Cantelli classes in the theory of
empirical measures, as explained in \cite{Talagrand}. In this
subsection we study this property and show that it is the
`correct' counterpart of NIP in integral logic (see \cite{K}).
Then, we point out the connection between NIP in continuous logic
and this property.

\begin{dfn}[Talagrand's stability, \cite{Fremlin4}, 465B]  \label{Talagrand-stable}
  Let $A\subseteq C(X)$ be a pointwise bounded family
of real-valued continuous functions  on $X$. Suppose that $\mu$
is a measure on $X$. We say that $A$ is {\em $\mu$-stable}, if
$A$ is a stable set of functions in the sense of Definition~465B
in \cite{Fremlin4}, that is, whenever $E\subseteq M$ is
measurable, $\mu(E)>0$ and $s<r$ in $\mathbb{R}$, there is some
$k\geqslant 1$ such that $(\mu^{2k})^*D_k(A, E,s,r)<(\mu E)^{2k}$
where
\begin{align*}
D_k(A, E,s,r) = \bigcup_{f\in A}\big\{w\in &
E^{2k}:f(w_{2i})\leqslant s, ~f(w_{2i+1})\geqslant r  \textrm{ for
} i<k\big\}.
\end{align*}
\end{dfn}

Now we invoke the first result connecting this notion. First, we
need a notion and a notation. If $X$ is any set and $A$ a subset
of ${\Bbb R}^X$, then the topology of {\em pointwise convergence}
on $A$ is that inherited from the usual product topology of ${\Bbb
R}^X$; that is, the coarsest topology on $A$ for which the map
$f\to f(x) : A\mapsto {\Bbb R}$  is continuous for every $x\in X$.
 We will denote the pointwise closure of $A$ in ${\Bbb R}^X$ by $cl_p(A)$.

\begin{fct}[{\cite[465D]{Fremlin4}}]
\label{almost-NIP} Let $X$ be a compact Housdorff space and
$A\subseteq C(X)$ be a pointwise bounded family of real-valued
continuous functions from $X$. Suppose that $\mu$ is a Radon
measure on $X$. If $A$ is $\mu$-stable, then $cl_p(A)$ is
$\mu$-stable and every element in $cl_p(A)$ is $\mu$-measurable.
\end{fct}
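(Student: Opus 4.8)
The plan is to prove Fact~\ref{almost-NIP} by reducing it to the corresponding statements in Fremlin's treatise, specifically the material around 465B--465D in \cite{Fremlin4}, while supplying the translation between the model-theoretic packaging used here and the measure-theoretic hypotheses there. First I would unwind the definition: $A$ having the $\mu$-almost NIP says precisely that $A$ is a \emph{stable set of functions} in Fremlin's sense (Definition~465B), so the hypothesis is verbatim the input to Fremlin's theory. The conclusion has two parts, and I would treat them separately.

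For the first part---that $cl_p(A)$ again has the $\mu$-almost NIP---the key observation is that the defining inequality $(\mu^{2k})^*D_k(A,E,s,r) < (\mu E)^{2k}$ involves the \emph{outer} measure of the set $D_k(A,E,s,r) \subseteq E^{2k}$, and that this set only depends on $A$ through pointwise behaviour of its members at tuples of points of $E$. The crucial monotonicity-type fact I would isolate is that for each fixed $w \in E^{2k}$ the condition ``$f(w_{2i}) \le s$ and $f(w_{2i+1}) \ge r$ for $i < k$'' is a closed condition on $f$ in the pointwise topology (it is an intersection of finitely many closed half-line conditions on the coordinate evaluations). Hence, up to replacing $s,r$ by a slightly narrower pair $s',r'$ with $s < s' < r' < r$ (which one can afford because the stability condition quantifies over \emph{all} $s < r$), one gets $D_k(cl_p(A), E, s', r') \subseteq \overline{D_k(A,E,s,r)}$ in a suitable sense, or more carefully $D_k(cl_p(A),E,s',r') \subseteq D_k(A,E,s,r) \cup (\text{a set of small outer measure})$; combining this with the stability inequality for $A$ at parameters $(s,r)$ yields the inequality for $cl_p(A)$ at $(s',r')$. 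Since $s',r'$ were arbitrary in the narrower range and the narrower range exhausts all pairs, $cl_p(A)$ is stable. This is essentially Fremlin's 465C-type argument (stability passes to the pointwise closure), and I would cite it but also sketch the $\epsilon$-room trick because it is the one genuinely delicate point.

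For the second part---that every $g \in cl_p(A)$ is $\mu$-measurable---this is exactly the content of Fremlin's 465D (or the Bourgain--Fremlin--Talagrand measurability theorem underlying it): a pointwise limit of a sequence from a stable set of measurable functions is measurable, and since $A \subseteq C(X)$ with $\mu$ Radon each member of $A$ is certainly $\mu$-measurable, and $X$ compact Hausdorff with $\mu$ Radon gives the angelic/sequential-closure input so that every element of $cl_p(A)$ is in fact a pointwise limit of a \emph{sequence} from $A$ (here one uses that a stable, hence relatively pointwise compact, family sits inside a Baire-class-one-type situation). I would invoke Fact~\ref{almost-NIP}'s stated source \cite[465D]{Fremlin4} directly for this implication rather than reprove it.

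The main obstacle I expect is the bookkeeping in the first part: making precise the claim that $D_k$ ``almost'' respects pointwise closure. The naive statement $D_k(cl_p(A)) = cl_p(D_k(A))$ is false because closure of a union need not be the union of closures, and because the half-line conditions defining $D_k$ are closed but the strict inequalities one would like are not; the $s < s' < r' < r$ shrinking is what repairs this, together with the fact that $cl_p(A)$ is itself pointwise bounded so the relevant tuples live in a compact product. Once that lemma is stated cleanly, plugging it into the outer-measure inequality is routine. So the write-up should (i) record the $(s',r')$ shrinking lemma, (ii) deduce stability of $cl_p(A)$, and (iii) cite \cite[465D]{Fremlin4} for measurability.
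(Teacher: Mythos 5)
The paper gives no proof of this Fact at all: it is imported verbatim from Fremlin \cite[465D]{Fremlin4}, so your overall strategy---unwind Definition~\ref{Talagrand-stable} into Fremlin's notion of a stable set of functions and cite 465C/465D---is exactly the paper's own route. (Your proposed detour for the second part, via sequential closures and an angelicity argument to reduce to pointwise limits of sequences, is both unnecessary and not obviously available for a general compact Hausdorff $X$ at this stage; but since you end by invoking 465D directly for measurability, this does no harm.)

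The one step you do argue in detail, however, has the inequalities reversed, and as written it fails. With $s<s'<r'<r$, the claimed containment $D_k(cl_p(A),E,s',r')\subseteq D_k(A,E,s,r)$ (even allowing an exceptional set of small outer measure) is false: if $g\in cl_p(A)$ witnesses $g(w_{2i})\leqslant s'$ and $g(w_{2i+1})\geqslant r'$, then an $f\in A$ agreeing with $g$ to within $\epsilon$ at the $2k$ points only satisfies $f(w_{2i})\leqslant s'+\epsilon$, which never yields $f(w_{2i})\leqslant s$ since $s<s'$. Note moreover that narrowing the pair \emph{weakens} the defining conditions, so $D_k(\cdot,E,s',r')\supseteq D_k(\cdot,E,s,r)$ --- the opposite of what your argument needs; correspondingly, what is relevant is not that the strict conditions are pointwise closed but that the relaxed conditions $f(w_{2i})<s+\epsilon$, $f(w_{2i+1})>r-\epsilon$ are pointwise \emph{open}, so that they must be met by some member of $A$ whenever they are met by some $g\in cl_p(A)$. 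The correct containment is therefore $D_k(cl_p(A),E,s,r)\subseteq D_k(A,E,s+\epsilon,r-\epsilon)$ for any $0<\epsilon<(r-s)/2$: to verify stability of $cl_p(A)$ at an arbitrary pair $(s,r)$ one applies the stability of $A$ at the \emph{narrower} pair $(s+\epsilon,r-\epsilon)$ and uses monotonicity of outer measure. With the roles of the wider and narrower pairs swapped in this way, no topological closure of $D_k$ and no exceptional set are needed, and the rest of your outline goes through.
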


In \cite{Fremlin75} Fremlin obtained a remarkable result, which
has become known as Fremlin's dichotomy:  a set of measurable
functions on a perfect measure space is either `good' (relatively
countably compact for the pointwise topology and relatively
compact for the topology of convergence in measure) or `bad'
(with neither property). We recall that a subset $A$ of a
topological space $X$ is {\em relatively countably compact} if
every sequence of $A$ has a cluster point in $X$.

\begin{fct}[Fremlin's dichotomy, \cite{Fremlin4}, 463J]
Let $(X,\Sigma,\mu)$ be a perfect $\sigma$-finite measure space,
and $\{f_n\}$ a sequence of real-valued measurable functions on
$X$. Then
\begin{enumerate}
\item[]   either $\{f_n\}$ has a subsequence which is convergent almost everywhere
\item[]   or $\{f_n\}$ has a subsequence with no measurable cluster point in $\mathbb{R}^X$.
\end{enumerate}
\end{fct}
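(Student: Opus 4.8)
\medskip
\noindent\textit{Proof proposal.} The plan is to recover this theorem of Fremlin \cite{Fremlin75} from Talagrand's theory of stable sets of measurable functions (Fact~\ref{almost-NIP} and the results of \cite[\S465]{Fremlin4}) together with Rosenthal's subsequence extraction \cite{Ros}. First I would make two harmless reductions. Fixing a uniformly continuous homeomorphism $\tau\colon\Rn\to(0,1)$ and putting $g_n=\tau\circ f_n$, it suffices to prove the dichotomy for the uniformly bounded sequence $\langle g_n\rangle$, with ``measurable cluster point'' interpreted in the compact space $[0,1]^X$: if $\langle g_{n_k}\rangle$ converges a.e.\ to a function that is $(0,1)$-valued a.e., then $\langle f_{n_k}\rangle$ converges a.e.; if the a.e.\ limit takes a value in $\{0,1\}$ on a set of positive measure, then $\langle f_{n_k}\rangle$ has no cluster point at all in $\Rn^X$ (which is stronger than the second alternative); and since $\tau$ is a uniformly continuous homeomorphism, a measurable cluster point of $\langle g_{n_k}\rangle$ in $[0,1]^X$ pulls back to one of $\langle f_{n_k}\rangle$ in $\Rn^X$. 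Using $\sigma$-finiteness, I would also assume that $\mu$ is a perfect probability measure (passing to an equivalent probability measure changes neither the null ideal nor, hence, the notions of ``a.e.''\ and ``measurable cluster point'', and preserves perfectness).

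Now the dichotomy. Either some subsequence of $\langle g_n\rangle$ has the $\mu$-almost NIP in the sense of Definition~\ref{Talagrand-stable}, or none does. If none does, then $\langle g_n\rangle$ itself is not stable, so there are $E\in\Sigma$ with $\mu E>0$ and reals $s<r$ with $(\mu^{2k})^*D_k(\{g_n\},E,s,r)=(\mu E)^{2k}$ for every $k$; Rosenthal's subsequence extraction (\cite{Ros}; see \cite[\S465]{Fremlin4}) then produces a subsequence $\langle g_{n_k}\rangle$ which is \emph{$\mu$-independent over $E$}, i.e.\ $\mu^*\bigl(E\cap\bigcap_{i\in I}\{g_{n_i}\le s\}\cap\bigcap_{j\in J}\{g_{n_j}\ge r\}\bigr)>0$ for all disjoint finite $I,J\subseteq\Nn$ (and then so is every one of its subsequences).

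In the stable case I am done quickly: if $\langle g_{n_k}\rangle$ has the $\mu$-almost NIP, then by (the measure-theoretic form of) Fact~\ref{almost-NIP} its pointwise closure consists of $\mu$-measurable functions, and by Talagrand's theorem that a stable set of measurable functions on a perfect probability space is relatively compact for the topology of convergence in measure (\cite[\S465]{Fremlin4}), together with the metrizability of that topology, $\langle g_{n_k}\rangle$ has a sub-subsequence converging in measure and hence a further subsequence converging $\mu$-a.e.; transporting back through $\tau^{-1}$ (and invoking the first paragraph if the a.e.\ limit should hit $\{0,1\}$) yields the first alternative.

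The work is in the independent case, and this is the step I expect to be the main obstacle. Assume toward a contradiction that the $\mu$-independent-over-$E$ sequence $\langle g_{n_k}\rangle$ has a measurable cluster point $g$ in $[0,1]^X$. Since for each $x\in X$ and $m\in\Nn$ the sets $\{k:|g_{n_k}(x)-g(x)|<1/m\}$ have the finite-intersection property, they extend to a free ultrafilter $\mathcal U$ on $\Nn$, and then $g(x)=\lim_{\mathcal U}g_{n_k}(x)$ for every $x$; in particular, on the positive-measure half $P$ of $E$ obtained by splitting at $c=\tfrac{s+r}{2}$ (say $P=E\cap\{g\le c\}$; the other case is symmetric in $s$ and $r$) one has $\lim_{\mathcal U}\mathbf{1}_{\{g_{n_k}\ge r\}}=0$ throughout $P$. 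A contradiction is then obtained by a Fubini/Rosenthal-type argument over $P$ with its normalised (perfect) measure, exploiting the positive-measure independence of the events $\{g_{n_k}\ge r\}$ inside $P$; this is the exact measure-theoretic counterpart of the step in the Bourgain--Fremlin--Talagrand theorem \cite{BFT} showing that an $\ell^1$-spreading sequence of functions has a non-measurable function in its pointwise closure, and it --- together with the relative compactness for convergence in measure used above --- is where perfectness of $\mu$ cannot be dispensed with. The precise combinatorics are carried out in \cite{Fremlin75} (cf.\ \cite[\S465]{Fremlin4}); the reductions, the dichotomy, and the stable case are routine by comparison.
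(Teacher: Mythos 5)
First, a point of comparison: the paper does not prove this statement at all --- it is quoted as a Fact with a citation to \cite{Fremlin75} --- so there is no in-paper proof to measure your attempt against, and any proposal here is really a proposal to reprove Fremlin's theorem from other sources. Your reductions (composing with a homeomorphism $\tau\colon\Rn\to(0,1)$, normalising to a probability measure, and the observation that an a.e.\ limit hitting $\{0,1\}$ forces the original sequence to have no cluster point in $\Rn^X$ at all) are sound, and the overall architecture --- split according to whether some subsequence is Talagrand-stable, handle the stable case by relative compactness for convergence in measure, handle the unstable case by extracting an independent subsequence --- is the standard modern route through \cite[\S 465]{Fremlin4}.

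The gap is that the entire content of the theorem is concentrated in the one step you defer, and your deferral is circular: you send the reader to ``the precise combinatorics\ldots in \cite{Fremlin75}'', which is the very paper whose theorem you are proving. Concretely, two things are missing. (1) In the independent case, the assertion that the events $\{g_{n_k}\geqslant r\}$ remain independent in positive outer measure \emph{inside} $P=E\cap\{g\leqslant c\}$ does not follow from independence over $E$: outer measure does not localise to measurable subsets, and $\mu^*\bigl(E\cap\bigcap_{j\in J}\{g_{n_j}\geqslant r\}\bigr)>0$ is compatible with $\mu^*\bigl(P\cap\bigcap_{j\in J}\{g_{n_j}\geqslant r\}\bigr)=0$. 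The genuine argument uses perfectness to push the situation onto a Radon image measure and then runs a zero--one-law/category argument there; none of that is present, and ``a Fubini/Rosenthal-type argument'' is not a proof of the step where all the difficulty lives. (2) Even the stable case needs care in this paper's logical economy: the implication ``stable $\Rightarrow$ every sequence has an a.e.\ convergent subsequence'' is exactly what Proposition~\ref{NIP-Fremlin}(i)$\Rightarrow$(iii) derives \emph{from} Fremlin's dichotomy, so you must be explicit that you are importing Talagrand's independent proof of relative compactness for convergence in measure rather than the paper's, or the argument loops. As it stands the proposal is a plausible map of the terrain, not a proof.
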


\medskip
We now define the notion of $\mu$-almost NIP and we will see
shortly the connection between this notion and NIP.

\begin{dfn}[$\mu$-almost NIP]  Let $A\subseteq C(X)$ be a pointwise bounded family
of real-valued continuous functions  on $X$. Suppose that $\mu$
is a measure on $X$. We say that $A$ has the {\em $\mu$-almost
NIP}, if every sequence in $A$ has a subsequence which is
convergent $\mu$-almost everywhere.
\end{dfn}

Let $(X,\Sigma,\mu)$ be a finite Radon measure on a compact space
$X$, and  $\mathcal{L}^0$ the set of all real-valued measurable
functions on $X$.  Let $A\subseteq \mathcal{L}^0$ be a  bounded
family. Then we say that $A$ satisfies condition (M), if for all
$s<r$ and all $k$, the set $D_k(A,X,r,s)$ is  measurable (this
applies, in particular, if $A$ is countable).

\begin{proposition} \label{NIP-Fremlin} Let $(X,\Sigma,\mu)$ be a finite Radon measure on a compact space $X$, and
$A\subseteq \mathcal{L}^0$ a  bounded family of real-valued
measurable functions on $X$. Consider the following statements.
\begin{itemize}
             \item [{\em (i)}] $A$ is $\mu$-stable.
             \item [{\em (ii)}] There do  not exist measurable set $E$  with  $\mu(E)>0$ and $s<r$ in $\mathbb{R}$, such that for each $n$, and almost all $w\in E^n$,
             for each subset $I$ of $\{1,\ldots,n\}$, there is $f\in A$ with $$f(w_i)<s \text{ if } i\in I \text{ and } f(w_i)>r \text{ if } i\notin I.$$
             \item [{\em (iii)}]  $A$ has  the $\mu$-almost NIP.
\end{itemize}
Then (i)~$\Rightarrow$~(ii). If $A$ satisfies condition (M), then
(ii)~$\Rightarrow$~(i). (i)~$\Rightarrow$~(iii), but
(iii)~$\nRightarrow$~(i) and (iii)~$\nRightarrow$~(ii).
\end{proposition}
\begin{proof} (i)~$\Rightarrow$~(ii) is evident.

(M)$\wedge$(ii)~$\Rightarrow$~(i) is Proposition~4 in
\cite{Talagrand}.

 (i)~$\Rightarrow$~(iii):   Let $\{f_n\}$ be any sequence in $A$, and take an arbitrary
subsequence of it (still denoted by $\{f_n\}$). Let $\mathcal{D}$
be a non-principal ultrafilter on $\mathbb{N}$, and then define
$f(x)=\lim_{\mathcal{D}} f_i(x)$ for all $x\in X$. (By the
assumption, there is a real number $r$ such that $|h|\leqslant r$
for each $h\in A$, and therefore $f$ is well defined.)  Since $A$
 is $\mu$-stable and $f\in \text{cl}_p(\{f_n\})$, the
function $f$ is measurable (see Fact~\ref{almost-NIP}). So every
subsequence of $\{f_n\}$ has a measurable cluster point.
Fremlin's dichotomy now tells  us that $\{f_n\}$ has a subsequence
which is convergent almost everywhere.

(iii)~$\nRightarrow$~(i)$\vee$(ii): In \cite{SF} Shelah and
Fremlin found that in a model of set theory there is a separable
pointwise compact set $A$ of real-valued Lebesgue measurable
functions on the unit interval which it is not $\mu$-stable. Thus
we see that (iii)~$\nRightarrow$~(i). Since the set $A$ is
separable, it satisfies condition (M) and therefore (ii) fails.
\end{proof}

Professor Fremlin kindly pointed out to us that Shelah's model,
described in their paper \cite{SF}, in fact deals with the point
that there is a countable set of \emph{continuous} functions which
is relatively pointwise compact in ${\mathcal L}^0(\mu)$ for a
Radon measure $\mu$, but that it is not $\mu$-stable. Of course,
in some cases, there are still some things to say (see
Theorem~\ref{NIP-compactness} below).

For a Hausdorff space $X$, $\textbf{M}_r(X)$ will be the space of
universally measurable functions, i.e. a function $f$ is an
element of $\textbf{M}_r(X)$ iff $f$ is $\mu$-measurable for
every Radon measure $\mu$ on $X$.

\begin{fct}[BFT Criterion, \cite{BFT}, Theorem~2F] \label{BFT}
 Let $X$ be a compact Hausdorff space, and $F\subseteq C(X)$ be
bounded. Then the following are equivalent.
\begin{itemize}
             \item [{\em (i)}] $F$ has the NIP (see Definition~\ref{NIP-family} above).
             \item [{\em (ii)}] $F$ is relatively compact in $\textbf{M}_r(X)$ for the topology of pointwise convergence.
             \item [{\em (iii)}] $F$ has the RSC (see Definition~\ref{RSC} above).
             \item [{\em (iv)}] Each sequence in $F$ has a subsequence which is convergent $\mu$-almost everywhere for every Radon measure $\mu$ on $X$.
             \item [{\em (v)}] For each Radon measure $\mu$ on $X$, each sequence in $F$ has a subsequence which is convergent $\mu$-almost everywhere.
\end{itemize}
\end{fct}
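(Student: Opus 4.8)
The plan is to prove this by combining three ingredients already available: the purely topological characterization of NIP from Fact~\ref{NIP-convergence}, the measure-theoretic analysis of the pointwise closure under almost-NIP from Fact~\ref{almost-NIP}, and Fremlin's dichotomy. The statement is really the Bourgain--Fremlin--Talagrand theorem packaged for a compact Hausdorff space $X$ and a uniformly bounded $F \subseteq C(X)$, so I would cite \cite{BFT} for the substantive content and organize the equivalences into a short cycle.

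\medskip
\noindent\emph{Step 1: (i)~$\Leftrightarrow$~(iii).} This is immediate from Fact~\ref{NIP-convergence}: condition (i) there ("$F$ does not contain an independent subsequence") is exactly the NIP of $F$, and condition (ii) there ("each sequence in $F$ has a pointwise convergent subsequence in $\mathbb{R}^X$") is precisely the RSC of Definition~\ref{RSC}. So (i) and (iii) are the same statement read through Fact~\ref{NIP-convergence}.

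\medskip
\noindent\emph{Step 2: (iii)~$\Leftrightarrow$~(iv)~$\Leftrightarrow$~(v).} Clearly (iv)~$\Rightarrow$~(v) trivially, and (iv)~$\Rightarrow$~(iii) since almost-everywhere convergence for a fixed Radon measure implies pointwise convergence on a conull set, and one then upgrades to genuine pointwise convergence using that the pointwise closure in $\mathbb{R}^X$ of a sequence with a convergent subsequence... — here is where care is needed; the cleanest route is (iii)~$\Rightarrow$~(iv). Given a sequence $\{f_n\}$ in $F$ and a Radon measure $\mu$, by (iii) pass to a pointwise convergent subsequence with limit $f$; since $F$ has NIP, by Fact~\ref{almost-NIP} (applied via the fact that NIP of $F$ implies $\mu$-almost NIP, which is the easy direction, every finite combinatorial shattering being measured outright) the limit $f$ is $\mu$-measurable and $\{f_n\}$ still has the $\mu$-almost NIP. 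Then Fremlin's dichotomy, applied to this subsequence of measurable functions on the perfect $\sigma$-finite space $(X,\mu)$ — noting the "bad" alternative is excluded because we have produced a measurable cluster point $f$ — yields a further subsequence convergent $\mu$-a.e. This gives (iv). Finally (v)~$\Rightarrow$~(iii): if $F$ failed RSC it would contain, by Fact~\ref{NIP-convergence}, an independent subsequence, equivalently one spanning a copy of $\ell^1$; such a sequence supports (via Rosenthal's construction, or directly: independence lets one build a measure on the Cantor set pushed forward along the coordinate map into $X$) a Radon measure with no a.e.-convergent subsequence, contradicting (v).

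\medskip
\noindent\emph{Step 3: (ii)~$\Leftrightarrow$~(iv).} That (ii) says exactly "every $f\in \mathrm{cl}_p(F)$ is $\mu$-measurable for every Radon $\mu$" is by definition of relative compactness in $\mathbf{M}_r(X)$. For (iv)~$\Rightarrow$~(ii): given $f \in \mathrm{cl}_p(F)$ and a Radon $\mu$, pick a sequence from $F$ pointwise converging to $f$ (legitimate since $F \subseteq C(X)$ with $X$ compact, so one works with a countable elementary substructure / the sequential density is part of the BFT setup), apply (iv) to get an a.e.-convergent subsequence, whose a.e.-limit is measurable and agrees $\mu$-a.e. with $f$; hence $f$ is $\mu$-measurable. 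Conversely (ii)~$\Rightarrow$~(iv) is exactly the (i)~$\Rightarrow$~(iii) implication of Proposition~\ref{NIP-Fremlin} carried out uniformly over all Radon $\mu$: measurability of the whole pointwise closure forces $\mu$-almost NIP of the sequence (here one needs that for a convergent sequence the relevant sets $D_k$ become measurable, or rather one argues as in the proof of \ref{NIP-Fremlin} that a measurable cluster point exists), then Fremlin's dichotomy delivers the a.e.-convergent subsequence.

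\medskip
\noindent\textbf{Main obstacle.} The delicate point is the passage from "pointwise closure consists of measurable functions" back to the combinatorial/almost-NIP side and then to a.e.-convergence — i.e. the implications feeding into Step 3 — because this is where the genuine BFT machinery lives: one must know that a sequence in $C(X)$ with no $\ell^1$-subsequence has, for each Radon $\mu$, a subsequence converging a.e., and the only self-contained argument routes through the angelic-space/stable-set analysis of \cite{BFT} and \cite{Fremlin4}. I would therefore present Steps 1 and 2 in detail and, for Step 3, reduce to Proposition~\ref{NIP-Fremlin} plus an explicit invocation of \cite{BFT}, rather than reproving the Bourgain--Fremlin--Talagrand theorem from scratch.
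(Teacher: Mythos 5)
Your overall architecture is the same as the paper's: the paper's proof of this Fact is simply a citation of Theorem~2F of \cite{BFT} for the equivalence of (i), (ii), (iii), combined with Fremlin's dichotomy to bring in the almost-everywhere-convergence conditions (iv) and (v). Your Step~1 and your closing of the cycle via (v)~$\Rightarrow$~(iii) delegate the hard content to exactly the same sources, so at that level the two proofs agree.

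However, one of the sub-arguments you do spell out has a genuine gap. In Step~3, for (iv)~$\Rightarrow$~(ii), you take $f\in \mathrm{cl}_p(F)$ and ``pick a sequence from $F$ pointwise converging to $f$.'' At that stage you are not entitled to such a sequence: the pointwise closure of $F$ in $\mathbb{R}^X$ is a net closure, and the statement that every element of $\mathrm{cl}_p(F)$ is already the limit of a \emph{sequence} from $F$ is precisely the angelicity conclusion of the Bourgain--Fremlin--Talagrand theorem (cf.\ Fact~\ref{Polish-angelic}), i.e.\ part of what is being proved, not a general property of subsets of $C(X)$. The correct route to (ii) is the one the paper uses elsewhere: (i) gives, by compactness, a uniform bound on shattering, hence the $\mu$-almost NIP of $F$ for every Radon $\mu$ (via 465T of \cite{Fremlin4}, as in the proof of Theorem~\ref{NIP-compactness}), and then Fact~\ref{almost-NIP} says directly that \emph{every} element of $\mathrm{cl}_p(F)$, sequentially reachable or not, is $\mu$-measurable. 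A smaller point: in Step~2 the detour through Fremlin's dichotomy for (iii)~$\Rightarrow$~(iv) is unnecessary (a pointwise convergent subsequence converges everywhere, hence $\mu$-a.e.\ for every $\mu$ simultaneously), and your stated reason for excluding the ``bad'' alternative is imprecise, since the dichotomy requires that \emph{every} subsequence have a measurable cluster point, not just the original sequence; this happens to be automatic after passing to a pointwise convergent subsequence, so no harm results there. Where Fremlin's dichotomy is genuinely needed is in (ii)~$\Rightarrow$~(v): for a fixed $\mu$, every subsequence has its ultrafilter limit as a cluster point, that limit lies in $\mathrm{cl}_p(F)$ and so is measurable by (ii), and the dichotomy then yields a $\mu$-a.e.\ convergent subsequence; this gives (v), after which (v)~$\Rightarrow$~(iii)~$\Rightarrow$~(iv) closes the loop.
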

\begin{proof}  The equivalence (i)--(iii) is the equivalence
(ii)~$\Leftrightarrow$~(vi)~$\Leftrightarrow$~(iv) of Theorem~2F
of \cite{BFT}. (See also Fact~\ref{NIP-convergence} above.)

 Fremlin's dichotomy and the equivalence
(v)~$\Leftrightarrow$~(vi)~$\Leftrightarrow$~(iv) of Theorem~2F
of \cite{BFT} imply
(v)~$\Leftrightarrow$~(i)~$\Leftrightarrow$~(iv).
\end{proof}

We will see that the BFT criterion  in NIP theories plays a role
similar to the role played by the Eberlein-Grothendieck criterion
in stable theories.

\subsection{NIP in a model}
In \cite{Sh} Shelah introduced the independence property (IP) for
0-1 valued formulas; a formula $\phi(x,y)$ has the IP if for each
$n$ there exist $b_1,\ldots,b_n$ in the monster model such that
each nontrivial Boolean combination of
$\phi(x,b_1),\ldots,\phi(x,b_n)$ is satisfiable. By some
set-theoretic considerations, a formula $\phi(x,y)$ has IP if and
only if $\sup\{|S_\phi(A)|:A\text{ of size }\kappa\}=2^\kappa$
for some infinite cardinal $\kappa$. Although this property was
introduced for counting types, its negation (NIP) is a successful
extension of local stability and also an active domain of
research in classical first order logic and other areas of
mathematics. The following generalization of NIP (in the
framework of continuous logic) also has a natural topological
presentation.

\begin{dfn}  \label{NIP-formula}
 Let $M$ be a structure, and $\phi(x,y)$ a formula.
We say that $\phi(x,y)$ is NIP on $M\times M$ (or on $M$) if for
each sequence $(a_n)\subseteq M$, and
                       $r>s$, there are some {\em finite} disjoint subsets $E,F$ of ${\Bbb N}$ such that $$\Big\{b\in M:\big( \bigwedge_{n\in E}\phi^{\M}(a_n,b)\leqslant
                        s\big)\wedge\big(\bigwedge_{n\in F}\phi^{\M}(a_n,b)\geqslant r\big)\Big\}=\emptyset.$$
\end{dfn}

\begin{lem} \label{equivalence}
 Let $M$ be a structure, and $\phi(x,y)$ a formula.
Then the following are equivalent.
\begin{itemize}
            \item [(i)]  $\phi(x,y)$ is NIP on $M$.
            \item [(ii)] For each sequence $(a_n)\subseteq M$, each saturated elementary extension ${N}\succeq{M}$, and
                       $r>s$, there are  disjoint subsets $E,F$ of ${\Bbb N}$ such that $$\Big\{b\in N: \big( \bigwedge_{n\in E}\phi(a_n,b)\leqslant
                        s\big)\wedge\big(\bigwedge_{n\in F}\phi(a_n,b)\geqslant r\big)\Big\}=\emptyset.$$
           \item [(iii)] For each sequence $\phi(a_n,y)$ in the set
                        $A=\{\phi(a,y):S_y(M)\to{\Bbb R}~|~a\in M\}$, where $S_y(M)$ is
                        the space of all complete types on $M$ in the variable $y$, and $r>s$
                        there are  {\em finite} disjoint subsets $E,F$ of ${\Bbb N}$ such that  $$\Big\{y\in S_y(M):\big( \bigwedge_{n\in E}\phi(a_n,y)\leqslant
                        s\big)\wedge\big(\bigwedge_{n\in F}\phi(a_n,y)\geqslant r\big)\Big\}=\emptyset.$$
           \item [(iv)] The condition (iii) holds for
                          {\em arbitrary} disjoint subsets $E,F$ of ${\Bbb N}$.
           \item [(v)] Every sequence $\phi(a_n,y)$ in $A$ has a convergent subsequence in ${\Bbb R}^X$, equivalently $A$ has the RSC.
\end{itemize}
\end{lem}
\begin{proof}
(ii)~$\Rightarrow$~(i) follows from the compactness theorem and
saturation. (Indeed, suppose that (i) fails, and then consider a
suitable type and get a contradiction.)

(i)~$\Rightarrow$~(iii) is just a restatement of the notion of
type.

 (iv)~$\Rightarrow$~(iii): Suppose that (iii) fails for the sequence $(a_n)\subseteq M$ and $s<r$.  Let  $E=\{i_n:n\in \Bbb
N\}$ and $F=\{j_n:n\in \Bbb N\}$ be arbitrary disjoint subsets of
${\Bbb N}$. Let $E_m=\{i_n:n\leq m\}$
 and $F_m=\{j_n:n\leq m\}$ for each $m\in\Bbb N$.
So, for each $m$, there is some $y_m\in S_y(M)$ such that $
\bigwedge_{n\in E_m}\phi(a_n,y_m)\leqslant s$ and
$\bigwedge_{n\in F_m}\phi(a_n,y_m)\geqslant r$. Let $z$ be a
cluster point of the sequence $(y_m)$. (Note that $z\in S_y(M)$
because the type space is compact.) Since the $\phi(a_n,y)$  are
continuous, it is easy to verify that $ \bigwedge_{n\in
E}\phi(a_n,z)\leqslant s$ and $\bigwedge_{n\in
F}\phi(a_n,z)\geqslant r$. As $E,F$ are arbitrary, (iv) fails.
(iii)~$\Rightarrow$~(iv) is evident.

(iii)~$\Leftrightarrow$~(v):  It is easy to verify that for the
set $A$, the dependence property in Definition~\ref{NIP-family}
is equivalent to the condition (iii). Now,  by
Fact~\ref{NIP-convergence} the proof is completed.

(ii)~$\Leftrightarrow$~(iv) follows from saturation (and the
notion of type).
\end{proof}

 Some similar notions are studied in  \cite{Iba14}.  Note that
the notion NIP on a model is `double local', i.e. $\phi$ can be
NIP on a model, but not in a theory.

\medskip
If $\phi(x,y)$ is a formula, we let $\tilde{\phi}(y, x)=
\phi(x,y)$. Hence $\tilde{\phi}$ is the same formula as $\phi$,
but we have exchanged the role of variables and parameters.

\begin{rmk}  Let $M$ be a structure and $\phi(x,y)$ a formula.
The space $S_{\tilde\phi}(M)$ of all $\tilde\phi$-types on $M$ is
the quotient of $S_y(M)$ given by the family of functions
$\{\phi(a,y):a\in M\}$ (see \cite{BU}, Fact~4.7). So in
Definition~\ref{NIP-formula} above, $S_y(M)$ can be replace by
$S_{\tilde\phi}(M)$.
\end{rmk}

\begin{thm}[NIP and $\mu$-stability] \label{NIP-compactness}
Let $M$ be an $\aleph_0$-saturated $L$-structure, $\phi(x;y)$ a
formula, $A=\{\phi(a,y):a\in M\}$ and $\tilde{A}=\{\phi(x,b):b\in
M\}$. Then the following are equivalent:
\begin{itemize}
             \item [{\em (i)}]  $\phi$ is NIP on $M$.
             \item [{\em (ii)}] $\tilde A$ is $\mu$-stable for all Radon measures $\mu$ on $S_\phi(M)$.
\end{itemize}
\end{thm}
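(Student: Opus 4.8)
The plan is to connect the two conditions through the topological characterizations already established, essentially showing that NIP of $\phi$ on $M$ (a statement about the family $A$ of functions in the variable $y$) is symmetric in the two variables and coincides with a measure-theoretic statement about the family $\tilde A$ in the variable $x$. First I would observe that NIP is not sensitive to which variable we ``resolve'': by Definition~\ref{NIP-formula}, $\phi(x,y)$ is NIP on $M$ iff $A=\{\phi(a,y):a\in M\}$ has no independent subsequence in $C(S_y(M))$, equivalently (Fact~\ref{NIP-convergence}, Fact~\ref{BFT}) iff $A$ has the RSC. The combinatorial shattering condition defining the independence property is manifestly symmetric in the roles of $a$'s and $b$'s: an independent sequence $\phi(a_n,y)$ witnessed by parameters $b$ over $S_y(M)$ is, after passing to an elementary extension via Proposition~\ref{key}(ii), exactly an independent sequence $\phi(x,b_n)$ witnessed by the $a$'s over $S_x(M)$. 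Hence $\phi$ is NIP on $M$ iff $\tilde A=\{\phi(x,b):b\in M\}$ has no independent subsequence in $C(S_x(M))$, iff $\tilde A$ has the RSC. (One must be a little careful here: ``NIP on $M$'' in the sense of the paper allows parameters only from $M$ itself, but the shattering in Definition~\ref{NIP-family} asks only for points of the \emph{type space}, so the two sides match up cleanly once we replace $M$ by $S_x(M)$, resp.\ $S_y(M)$, and invoke the density of realized types.)

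Next I would apply Fact~\ref{BFT} (the BFT Criterion) to the compact Hausdorff space $X=S_x(M)$ and the uniformly bounded family $F=\tilde A\subseteq C(S_x(M))$ (boundedness by $\flat_\phi$). The equivalence (i)$\Leftrightarrow$(iii) there gives: $\tilde A$ has the NIP as a family of functions iff $\tilde A$ has the RSC. Combining with the previous paragraph, $\phi$ is NIP on $M$ iff $\tilde A$ has the RSC iff $\tilde A$ is relatively compact in $\mathbf{M}_r(S_x(M))$, i.e.\ every element of $\mathrm{cl}_p(\tilde A)$ is $\mu$-measurable for every Radon measure $\mu$.

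The remaining point is to upgrade ``every $f\in\mathrm{cl}_p(\tilde A)$ is $\mu$-measurable'' to ``$\tilde A$ has the $\mu$-almost NIP'' for each Radon $\mu$, which is condition (ii) of the theorem. Here I would use Fact~\ref{almost-NIP} together with Proposition~\ref{NIP-Fremlin}: for a \emph{fixed} Radon measure $\mu$ on $X=S_x(M)$, condition (iii) of Proposition~\ref{NIP-Fremlin} for the family $\tilde A$ (every sequence has a $\mu$-a.e.\ convergent subsequence) is exactly condition (v) of Fact~\ref{BFT} specialized to that single $\mu$, which we have just obtained. But Proposition~\ref{NIP-Fremlin} warns that (iii)$\nRightarrow$(i) in general — the Shelah–Fremlin example. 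The escape is that we do not have (iii) for a single $\mu$ in isolation but for \emph{all} Radon measures simultaneously (this is precisely what distinguishes the BFT setting from Fremlin's dichotomy), and for the family $\tilde A$ the relevant sets $D_k(\tilde A,E,s,r)$ become measurable once we pass to the pointwise closure, which is legitimate because RSC gives a countable subfamily controlling any given witness. Concretely: if some $\mu$-almost NIP failed, i.e.\ some measurable $E$ with $\mu(E)>0$ and $s<r$ had $(\mu^{2k})^*D_k(\tilde A,E,s,r)=(\mu E)^{2k}$ for all $k$, then by a standard extraction one builds a sequence in $\tilde A$ witnessing the full independence property on $S_x(M)$ in the sense of Definition~\ref{NIP-family}, contradicting that $\tilde A$ has the NIP. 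Conversely, if $\tilde A$ has the $\mu$-almost NIP for all Radon $\mu$, then by Fact~\ref{almost-NIP} every pointwise-closure element is $\mu$-measurable for every $\mu$, which is Fact~\ref{BFT}(ii), hence $\tilde A$ has the RSC, hence $\phi$ is NIP on $M$.

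The main obstacle I anticipate is the last step — extracting a genuinely shattering sequence of functions in $\tilde A$ from the failure of $\mu$-almost NIP. The subtlety, flagged by the Shelah–Fremlin counterexample to Proposition~\ref{NIP-Fremlin}, is that $\mu$-almost NIP is strictly weaker than NIP for a single measure, so the argument cannot be purely measure-theoretic for one $\mu$; it must exploit that the hypothesis quantifies over all Radon measures on $S_x(M)$, together with the fact that on a compact space one can cook up, from an ``almost shattering'' configuration, Radon measures concentrating on the relevant coordinates — this is exactly where one needs the full strength of the BFT equivalence (i)$\Leftrightarrow$(ii)$\Leftrightarrow$(iii) rather than Fremlin's dichotomy alone. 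I would route this through Fact~\ref{BFT} directly rather than reprove it, so that the only genuinely new content of the theorem is the symmetry observation of the first paragraph plus the bookkeeping identifying the model-theoretic NIP of $\phi$ with the function-space NIP of $\tilde A$.
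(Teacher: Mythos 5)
Your overall architecture (route everything through Fact~\ref{BFT} and Fact~\ref{almost-NIP}) resembles the paper's, but the two steps you treat as routine are the actual substance of the theorem, and as written both are gaps.

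First, the symmetry claim in your opening paragraph is not ``manifest''. An independent sequence $\phi(a_n,y)$ requires, for each finite $I\subseteq\{1,\dots,k\}$, a \emph{separate} witness $b_I$; these $2^k$ witnesses do not assemble into a single sequence $(b_n)$ making $\{\phi(x,b_n)\}$ independent. NIP duality is a genuine theorem, and in the paper it is stated as a \emph{corollary} of Theorem~\ref{NIP-compactness} (obtained by applying the direction (i)$\Rightarrow$(ii) to $\tilde\phi$ and then the BFT criterion), so assuming it at the outset is circular. The paper's proof in fact never needs it as an input: the sets $D_k(\tilde A,E,s,r)$ are unions over $f=\phi(\cdot,b)$ of conditions on tuples $(a_1,\dots,a_{2k})\in E^{2k}$, so condition (ii) of Proposition~\ref{NIP-Fremlin} for $\tilde A$ is already a statement about shattering of $a$-tuples, i.e.\ about the family $A$; no change of variable is required.

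Second, and more seriously, your contrapositive for the forward direction --- ``from failure of $\mu$-almost NIP one extracts by a standard argument an independent subsequence of $\tilde A$'' --- is precisely the implication that fails in general: the Shelah--Fremlin example you yourself cite concerns failure of $\mu$-almost NIP for a \emph{single} $\mu$, and your proposed escape (quantifying over all Radon measures) is unavailable here because the contrapositive hypothesis fixes one $\mu$. The paper closes this gap with a model-theoretic input your proposal never uses: by the compactness theorem of continuous logic, NIP of $\phi$ yields a uniform finite $n$ such that \emph{no} $n$-tuple is shattered; hence for every $E$ with $\mu(E)>0$ and every $w\in E^n$ some pattern $I$ is unrealized, which gives condition (ii) of Proposition~\ref{NIP-Fremlin} outright (with no measurability or outer-measure issues), and Fremlin's 465T then upgrades this to $\mu$-almost NIP. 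Your converse direction via Fact~\ref{almost-NIP} and BFT is essentially the paper's, except that its last step ($\tilde A$ has RSC, therefore $\phi$ is NIP) again silently invokes duality; the paper instead bootstraps: $\tilde A$ having RSC gives $\tilde\phi$ NIP, then (i)$\Rightarrow$(ii) applied to $\tilde\phi$ gives that $A$ is universally almost NIP, and BFT applied to $A$ finally gives that $\phi$ is NIP.
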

\begin{proof} (i)~$\Rightarrow$~(ii): By the compactness theorem of
continuous logic, since $M$ is $\aleph_0$-saturated and
$\phi(x,y)$ is NIP on $M$, there is some integer $n$ such that no
subset (of $M$) of size $n$ is shattered by $\phi(x,y)$. We note
that by Proposition~465T of \cite{Fremlin4}, the conditions (i)
and (ii) of Proposition~\ref{NIP-Fremlin} are equivalent. So if
$E\subseteq M$, $\mu(E)>0$, $r>s$, then for each
$(a_1,\ldots,a_n)\in E^n$ there is a set $I\subseteq
\{1,\ldots,n\}$ such that
$$\Big\{y\in S_y(M):\big( \bigwedge_{i\in I}\phi(a_i,y)\leqslant
s\big)\wedge\big(\bigwedge_{i\notin I}\phi(a_i,y)\geqslant
r\big)\Big\}=\emptyset,$$ where $S_y(M)$ is the space of all
complete types on $M$ in the variable $y$. Since $M\subseteq
S_y(M)$, the set ${\tilde A}$ is $\mu$-stable for every Radon
measure $\mu$ on $S_\phi(M)$.

(ii)~$\Rightarrow$~(i): Suppose that $\tilde A$ is $\mu$-stable
for every Radon measure $\mu$ on $\tilde{X}=S_\phi(M)$.  Thus, by
Fact~\ref{almost-NIP}, $\tilde A$ is relatively compact in
$\textbf{M}_r({\tilde X})$ (the space of all $\mu$-measurable
functions on $\tilde X$ for each Radon
 measure $\mu$ on ${\tilde X}$). By the BFT criterion,
for each sequence $\phi(x,a_n)$ in $\tilde A$, and  $r<s$, there
is some $I\subseteq {\mathbb{N}}$ such that $$\Big\{x\in
S_x(M):\big( \bigwedge_{i\in I}{\phi}(x,a_i)\leqslant
s\big)\wedge\big(\bigwedge_{i\notin I}{\phi}(x,a_i)\geqslant
r\big)\Big\}=\emptyset.$$ Thus the dual formula ${\tilde
\phi}(y,x)$ is NIP on $M$. So, by applying the direction
(i)~$\Rightarrow$~(ii) to the formula $\tilde \phi$, we see that
${\tilde{\tilde A}}=A$ is $\mu$-stable for every Radon measure
$\mu$ on   $X=S_{\tilde{\phi}}(M)$. Thus, again by the BFT
criterion and Proposition~\ref{NIP-Fremlin}, we conclude that
$\phi(x,y)$ is NIP on $M$.
\end{proof}

In fact the proof of the previous result says more: if $M$  is
$\aleph_0$-saturated, then $\phi$ is NIP on $M$ if and only if
$\tilde \phi$ is NIP on $M$.

\begin{cor}  Under the assumptions in Theorem~\ref{NIP-compactness}, $\phi$ is NIP on $M$ if and only if $A$
is $\mu$-stable for every Radon measure $\mu$ on
$S_{\tilde\phi}(M)$.
\end{cor}

The previous results also show  why  the $\mu$-stability is the
`correct' notion of NIP in integral logic (see \cite{K}).

\medskip
The following is a translation of the BFT criterion into
(continuous) model theory. Note that here we do not need any
saturation conditions on the model.

\begin{proposition}[NIP and $\mu$-almost NIP]  \label{NIP-almost}
Let $M$ be an  $L$-structure, $\phi(x;y)$ a formula and
$A=\{\phi(a,y):a\in M\}$. Then the following are equivalent:
\begin{itemize}
             \item [{\em (i)}]  $\phi$ is NIP on $M$.
             \item [{\em (ii)}] $A$ has the $\mu$-almost NIP for all Radon measures $\mu$ on $S_{\tilde\phi}(M)$.
 \end{itemize}
\end{proposition}
\begin{proof}
This is the equivalence (i)~$\Leftrightarrow$~(v) of
Fact~\ref{BFT} with $X=S_{\tilde\phi}(M)$ and $F=A$.
\end{proof}

\begin{rmk} One can not expect  the notion `NIP on a model'
to be symmetrical. It is easy to make examples such that $\phi$ is
NIP on $M$ but $\tilde\phi$ is not NIP on $M$. Of course, if $M$
is $\aleph_0$-saturated, they are the same.
\end{rmk}

\subsection{Almost definable coheirs} \label{3}
It is well known that every type on a stable model is definable
(see \cite{Ben-Gro}). Here we want to give a counterpart of this
fact for NIP theories. In \cite{K} it is shown that if a formula
$\phi$ (in integral logic) is $\mu$-stable on a model $M$, then
every type in $S_\phi(M)$ is $\mu$-almost definable.

We present the notion of  `coheir' here. Let $M^*$  be a
saturated elementary extension of $M$. A type $p(x)\in
S_\phi(M^*)$ is called a {\em coheir (of a type) over $M$} if for
every condition $\varphi=0$ in $p(x)$ and every $\epsilon>0$, the
condition $|\varphi|\leq \epsilon$ is satisfiable in $M$. (In
classical ($\{0,1\}$-valued) model theory, this means that every
formula in $p(x)$ is realized in $M$.)  In this case we say that
$p$ is  $M$-finitely satisfiable. It is easy to verify that a
type $p(x)\in S_\phi(M^*)$ is a coheir over $M$ if and only if
there are $(a_i\in M: i\in I)$ and an ultrafilter $\mathcal D$ on
$I$ such that $\lim_{i,\mathcal D}tp(a_i/M)=p$, where the
$\mathcal D$-limit is taken in the logic topology. (For the
definition of $\mathcal D$-limit, see Section~5 of \cite{BBHU}.)
By Proposition~\ref{key}, $p$ is a coheir over $M$ if and only if
there are $(a_i\in M: i\in I)$ and an ultrafilter $\mathcal D$ on
$I$ such that $\lim_{i,\mathcal D}I_{p_i}=I_p$, where
$p_i=tp(a_i/M)$ and the $\mathcal D$-limit is taken in the weak*
topology on $\sigma_{M^*}(M^*)$.

\medskip
Here we say a function $\psi:X\to \mathbb{R}$ on a topological
space $X$ is {\em universally measurable}, if it is
$\mu$-measurable for every probability Radon measure $\mu$ on $X$.

\begin{dfn} \label{universal dfn}  Let $M^*$ be a saturated elementary extension
of $M$ and $p(x)\in S_\phi(M^*)$ be a coheir of a type over $M$.
 We say that a universally measurable function
$\psi:S_{\tilde\phi}(M)\to\mathbb{R}$ {\em defines} $p$  if
$\phi(p,a)=\psi(tp_{\tilde\phi}(a/M))$ for all $a\in M^*$, and in
this case we say that $p$ is {\em universally definable}.
\end{dfn}

The above notion is well defined since every coheir is
$M$-finitely satisfiable and so $M$-invariant.

\begin{rmk}[\cite{Pillay-Grothendieck}, Remark~2.1] There is a
correspondence between the set of all coheirs of types over $M$
and the closure of the set
$A=\{\phi^a(y):S_{\tilde\phi}(M)\to{\mathbb{R}}~|a\in M\}$, where
$\phi^a(q)=\phi(a,q)$ for all $q\in S_{\tilde\phi}(M)$. Indeed,
let $M^*$ be a saturated elemetary extension of $M$.
 Then for any global $M$-finitely satisfiable $\phi$-type $p(x)\in S_\phi(M^*)$ there
is a function $\psi_p$ in the closure $A$ such that
$\psi_p(tp_{\tilde\phi}(a/M))=\phi(p,a)$ for all $a\in M^*$.
Indeed, suppose that $tp_\phi(a_i/M^*)\to p$ in the logic
topology, where $a_i\in M$. Define $\psi_p(y)=\lim_i\phi(a_i,y)$
for all $y\in S_{\tilde\phi}(M)$. Now, it is easy to verify that
$\psi_p(tp_{\tilde\phi}(a/M))=\phi(p,a)$ for all $a\in M^*$.  To
summarize, let $S_\phi^{M\text{-fs}}(M^*)$ be the set of all
global $M$-finitely satisfiable $\phi$-types (over the monster
model $M^*$). Then the map $S_\phi^{M\text{-fs}}(M^*)\to
\overline{A}$, defined by $p\mapsto\psi_p$, is a homeomorphic
embedding of $S_\phi^{M\text{-fs}}(M^*)$ in the pointwise
convergence topology on $\overline{A}\subseteq {\Bbb
R}^{S_{\tilde\phi}(M)}$.
\end{rmk}

For simplicity, we will write $\phi(p,a)=\phi(p,b)$ where
$b=tp_{\tilde\phi}(a/M)$ and $a\in M^*$.

\medskip
The following is  a translation of the BFT criterion:

\begin{proposition} \label{NIP-dfn} Let $M$ be a
structure and $\phi(x,y)$ a formula. Then the following are
equivalent:
\begin{itemize}
             \item [{\em (i)}] $\phi$ is NIP on $M$.
             \item [{\em (ii)}] Every  coheir of a $\phi$-type over $M$  is definable by a
             universally measurable relation $\psi(y)$ over $S_{\tilde\phi}(M)$.
\end{itemize}
\end{proposition}
\begin{proof} (i)~$\Rightarrow$~(ii):  Let
$A=\{\phi^a(y):S_{\tilde\phi}(M)\to{\mathbb{R}}~|a\in M\}$. By
NIP, $A$ is relatively compact in
$\textbf{M}_r(S_{\tilde\phi}(M))$ (see the BFT criterion).
Suppose that $p_{a_i}\to p\in S_{\phi}(M^*)$
 where $p_{a_i}$ is realized by $a_i\in M$ and $M^*$ is a saturated elementary extension of $M$.
 (We note that the set of all types realized in $M$ is dense in
the set of all coheirs.) Thus $\phi^{a_i}\to\psi$ pointwise where
$\psi$ is universally measurable, and $\psi$ defines $p$.

(ii)~$\Rightarrow$~(i): Suppose that  $\phi^{a_i}\to\psi$
pointwise. We can assume that $p_{a_i}\to p\in S_{\phi}(M^*)$.
Suppose that $p$ is definable by a universally measurable relation
$\varphi$, so we have $\psi=\varphi$ on $S_{\tilde\phi}(M)$.  So,
$\psi$ is measurable for all Radon measures on
$S_{\tilde\phi}(M)$. Again by the BFT criterion, $\phi$ is NIP.
\end{proof}

\begin{rmk}  \label{invariant-type}
In \cite{HP}, the authors  showed that, in 0-1 valued logic,
every global invariant type admits a Borel definition assuming
NIP. This implies, in particular, that every global $M$-invariant
type admits a universally measurable definition. Note that
Proposition~\ref{NIP-dfn} is an extension  of their result to
continuous logic for global finitely satisfiable types.
 Moreover, one can
show that  $\phi(x,y)$ is NIP on a  separable model $M$ if and
only if  every global $M$-finitely satisfiable $\phi$-type is
Baire 1 definable (see also Fact~\ref{Polish-compact},
Theorem~\ref{Baire-dfn} and Remark~\ref{NIP=Baire 1} below).
\end{rmk}

Here we mention a characterization of NIP in
 terms  of measure algebra. For this, a definition is needed. Let
$\phi(x,y)$ be a formula, $r\in\mathbb{R}$ and $a\in M$. By
$\{\phi(x,a)\geqslant r\}$ we denote the set $\{p\in
S_\phi(M):\phi(p,a)\geqslant r \}$. The set $\{ \phi(x,a)\leqslant
r\}$ has the obvious meaning. The measure algebra generated by
$\phi$ on $S_\phi(M)$ is the measure algebra generated by all
sets of the forms $\{\phi(x,a)\geqslant r\}$ and
$\{\phi(x,b)\leqslant s\}$ where $a,b\in M$ and $r,s\in
\mathbb{R}$. One can assume that all $r,s$ are rational numbers.
Now, a straightforward translation of the proof for classical
first order theories, as can be found in
\cite[Theorem~3.14]{Keisler}, implies that:

\begin{proposition} \label{Keisler-NIP} Let $T$ be a theory and $\phi(x,y)$ a formula.
Then the following are equivalent:
\begin{itemize}
             \item [{\em (i)}] $\phi$ is NIP.
             \item [{\em (ii)}] For every sufficiently saturated model $M$, each Radon measure on $S_\phi(M)$ has a
             countably generated measure algebra (which is the
             measure algebra generated by $\phi$).
\end{itemize}
\end{proposition}

Now we are going to give another characterization of NIP. First we
need some definitions. Let $\psi$ be a measurable function on
$(S_{\tilde\phi}(M),\mu)$ where $\mu$ is a probability Radon
measure on $S_{\tilde\phi}(M)$. Then $\psi$ is called an {\em
almost ${\tilde\phi}$-definable relation over $M$} if there is a
sequence $g_n:S_{\tilde\phi}(M)\to \mathbb{R}$, $|g_n|\leqslant
|{\tilde\phi}|$, of continuous functions such that $\lim_n
g_n(p)=\psi(p)$ for almost all $p\in S_{\tilde\phi}(M)$. (We note
that by the Stone-Weierstrass theorem every continuous function
$g_n:S_{\tilde\phi}(M)\to \mathbb{R}$ can be expressed as a
uniform limit of algebraic combinations of (at most countably
many) functions of the form $p\mapsto {\tilde\phi}(p, b)$, $b\in
M$.)

 An almost
$\tilde\phi$-definable relation $\psi(y)$ over $M$ defines a
coheir $p(x) \in S_\phi(M^*)$ (of a $\phi$-type over $M$) if the
set $A_0\subseteq S_{\tilde\phi}(M)$ is measurable and
$\mu(A_0)=1$, where $A_0=\{b\in S_{\tilde\phi}(M):\phi(p,b)
=\psi(b)\}$. In this case we say that $p$ is ($\mu$-)\emph{almost
definable}. It is easy to check that almost definability is well
defined.  Suppose that every coheir $p$  is almost definable by a
measurable function $\psi^p$.
 Then, we say that $p$ is {\em almost equal to $q$},
denoted by $p\equiv_\mu q$, if $\psi^p=\psi^q$  $\mu$-almost
everywhere. For a coheir $p(x)$, define  $[p]_\mu=\{q\in
S_\phi(M^*): p\equiv_\mu q  \text{ and $q$ is a coheir}\}$ and
$[S_\phi]_\mu(M)=\{[p]_\mu:p\in S_\phi(M^*) \text{ is a
coheir}\}$. Then $[S_\phi]_\mu(M)$ has a natural topology which
is defined by metric $d([p]_\mu,[q]_\mu)=\int|\psi^p-\psi^q|d\mu$
for coheirs $p,q\in S_\phi(M^*)$.

\medskip
Recall that the density character of a topological space $X$, is
the least infinite cardinal number of a dense subset of $X$. When
measuring the size of a structure we will use its density
character (as a metric space), denoted $\|M\|$, rather than its
cardinality. Similarly, since $[S_\phi]_\mu(M)$ is a metric
space, we measure the size $[S_\phi]_\mu(M)$ by its density
character $\|[S_\phi]_\mu(M)\|$.

\begin{thm}[Almost definability of coheirs] \label{almost-dfn}
 Let $T$ be a theory and $\phi(x,y)$ a formula. Then the following are equivalent:
\begin{itemize}
             \item [{\em (i)}] $\phi$ is NIP.
              \item [{\em (ii)}] For every model $M$ and measure $\mu$ on $S_{\tilde\phi}(M)$, every
              coheir of a type over $M$ is $\mu$-almost definable, and $\|[S_{\phi}]_\mu(M)\|\leqslant\|M\|$.
\end{itemize}
\end{thm}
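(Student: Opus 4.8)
The plan is to prove the two implications separately, building on the BFT criterion (Fact~\ref{BFT}) and its local reformulation in Proposition~\ref{NIP-dfn}, together with the measure-theoretic facts about stable sets of functions (Fact~\ref{almost-NIP}, Theorem~\ref{NIP-compactness}). For (i)~$\Rightarrow$~(ii), fix a model $M$, a probability Radon measure $\mu$ on $S_\phi(M)$, and a type $p\in S_\phi(M)$. By Theorem~\ref{NIP-compactness} (or the NIP duality corollary), since $\phi$ is NIP the set $\tilde A=\{\phi(x,b):b\in M\}$ has the $\mu$-almost NIP, so by Fact~\ref{almost-NIP} every element of $\mathrm{cl}_p(\tilde A)$ is $\mu$-measurable. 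Choosing a net (reducing to a sequence via separability arguments on the relevant countable subalgebra, cf.\ the Stone–Weierstrass remark preceding the statement) $b_i\in M$ with $p_{b_i}\to p$ in $S_\phi(M)$, we get $\phi(x,b_i)\to\psi^p$ pointwise with $\psi^p$ $\mu$-measurable; by construction $\psi^p(q)=\phi(q,p)$ on the types realized in $M$, which are dense, so $\psi^p$ witnesses that $p$ is $\mu$-almost definable. Hence the first half of (ii).

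For the density bound $\|[S_\phi](M)\|\leqslant\|M\|$, I would argue as follows. The metric on $[S_\phi](M)$ is $d([p],[q])=\int|\psi^p-\psi^q|\,d\mu$, i.e.\ it is the restriction of the $L^1(\mu)$-metric to $\{\psi^p:p\in S_\phi(M)\}$. Since each $\psi^p$ is a pointwise limit of continuous functions of the form $q\mapsto$ (algebraic combination of $\phi(q,b)$'s, $b\in M$), and these are $L^1(\mu)$-limits of such combinations (by dominated convergence, all functions being bounded by $|\phi|$), the closed linear span in $L^1(\mu)$ of $\{q\mapsto\phi(q,b):b\in M\}$ contains every $\psi^p$. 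A dense subset $D\subseteq M$ of size $\|M\|$ yields, via algebraic combinations with rational coefficients, a dense subset of this span of size $\|M\|+\aleph_0$; and by uniform continuity of $\phi$ (the modulus $\Delta_\phi$), $b\mapsto(q\mapsto\phi(q,b))$ is uniformly continuous from $M$ into $C(S_\phi(M))\hookrightarrow L^1(\mu)$, so $D$ already gives an $L^1$-dense family. Therefore $\|[S_\phi](M)\|\leqslant\|M\|$.

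For (ii)~$\Rightarrow$~(i), I would prove the contrapositive in the spirit of the standard ``counting'' argument. Suppose $\phi$ has IP; then in a suitable model $M$ there is a sequence $(b_n)$ witnessed by $s<r$ such that every Boolean pattern is realized, and one can arrange an independent sequence of functions $\phi(x,b_n)$ on $S_\phi(M)$. Following the classical construction, build a measure $\mu$ on $S_\phi(M)$ (e.g.\ the image of the coin-flipping measure on $2^{\mathbb N}$ under a map into $S_\phi(M)$ realizing the patterns) with respect to which the functions $\phi(x,b_n)$ behave like independent random variables taking values in $\{\le s,\ge r\}$ with probability $\tfrac12$. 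Two cases arise: either some type $p$ fails to be $\mu$-almost definable — then (ii) fails directly — or every type is $\mu$-almost definable, in which case I would show $\|[S_\phi](M)\|$ is too large: the $2^{\aleph_0}$ many ``generic'' types selecting distinct Boolean patterns give $\psi^p$'s that are pairwise at $L^1$-distance bounded below by a constant multiple of $(r-s)$, so $[S_\phi](M)$ contains a discrete set of size $2^{\aleph_0}$, while one can take $\|M\|=\aleph_0$; this contradicts $\|[S_\phi](M)\|\leqslant\|M\|$.

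The main obstacle I anticipate is the second half of (i)~$\Rightarrow$~(ii): making the passage from ``pointwise'' to ``$L^1(\mu)$'' limits rigorous while controlling the density character, since a priori $\{\psi^p\}$ need not lie in the separable part of $L^1(\mu)$ unless one is careful about which countable subalgebra of $\phi$-formulas one works over, and unless one genuinely uses $\mu$-almost NIP (not just NIP) to keep the relevant span ``small''. The IP direction is comparatively routine once the measure $\mu$ is set up, but one must check that the constructed $\mu$ is genuinely Radon on $S_\phi(M)$ and that the two sub-cases exhaust the possibilities.
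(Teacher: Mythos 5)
Your direction (i)~$\Rightarrow$~(ii) is essentially the paper's argument in different clothing: the paper obtains the measurable definition $\psi^p$ from Proposition~\ref{NIP-dfn} and then uses density of $C(S_\phi(M))$ in $L^1(\mu)$ to produce a sequence of continuous functions converging a.e.\ and to bound $\|L^1(\mu)\|\leqslant\|C(S_\phi(M))\|\leqslant\|M\|$ via Stone--Weierstrass; you reach the same $\psi^p$ via Theorem~\ref{NIP-compactness} and Fact~\ref{almost-NIP} and bound the density the same way. One caveat: the paper's notion of ``$\mu$-almost definable'' requires an a.e.\ convergent \emph{sequence} of continuous functions, and a pointwise limit along a net of the $\phi(x,b_i)$ does not supply that by itself; the step you wave at (``reducing to a sequence via separability arguments'') is exactly the $L^1$-approximation plus extraction of an a.e.\ convergent subsequence that the paper spells out, so make that explicit.

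Your direction (ii)~$\Rightarrow$~(i) genuinely diverges from the paper, and this is where there is a real gap. The paper's proof is direct and does not use the density clause at all: given a net of realized types $p_{a_i}\to p$, the pointwise limit $\psi(y)=\lim_i\phi(a_i,y)$ agrees on a set of full measure with the $\mu$-almost definition $\psi_\mu$ of $p$, and since Radon measures are complete, $\psi$ is $\mu$-measurable for every Radon $\mu$; Proposition~\ref{NIP-dfn} then yields NIP in two lines. Your contrapositive counting argument is a classical alternative (in the spirit of Keisler), but its crux is unproved: you need $d([p_I],[p_J])=\int|\psi^{p_I}-\psi^{p_J}|\,d\mu$ to be bounded below by a constant times $r-s$ for $2^{\aleph_0}$ many patterns $I$. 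The functions $\psi^{p_I}$ and $\psi^{p_J}$ are only guaranteed to differ by $r-s$ on the countable set $\{b_n: n\in I\triangle J\}$, which is typically $\mu$-null for the measures you propose (weak* limits of averages of Dirac masses), and since the $\psi^p$ are merely measurable, integrals against such limit measures are not limits of the finite averages. Making this work requires choosing the $I$'s with pairwise symmetric differences of positive density and a measure for which the separation survives; none of that is routine, and as written the lower bound does not follow. Since the first clause of (ii) already implies (i) directly, you should take that route and drop the counting argument.
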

\begin{proof} (i)~$\Rightarrow$~(ii):  Suppose that  the coheir $p\in S_\phi(M^*)$ is
definable by a universally measurable relation $\psi$ on
$S_{\tilde\phi}(M)$. Let $\mu$ be a Radon measure on
$S_{\tilde\phi}(M)$. Then there is a sequence $g_n$ of continuous
functions on $S_{\tilde\phi}(M)$ such that $g_n\to \psi$ in
$L^1(\mu)$ (see \cite[7.9]{Folland}), and hence
a subsequence (still denoted by $g_n$) that converges to $\psi$
$\mu$-almost everywhere. So $p$ is $\mu$-almost definable.
Moreover, by the Stone-Weierstrass theorem,
$\|C(S_{\tilde\phi}(M))\|\leqslant\|M\|$. Now, since
$C(S_{\tilde\phi}(M))$ is dense in $L^1(\mu)$ (again see
\cite[7.9]{Folland}), $\|L^1(\mu)\|\leqslant\|M\|$. By definition,
$\|[S_{\phi}]_\mu(M)\|\leqslant\|M\|$ and the proof is completed.

(ii)~$\Rightarrow$~(i): Let $p\in S_\phi(M^*)$ be a coheir of a
type over $M$. Suppose that $p_{a_i}\to p$ where $p_{a_i}$ is
realized by $a_i\in M$. Then the function
$\psi(y)=\lim_i\phi(a_i,y)$ is measurable for all Radon measures
on $S_{\tilde\phi}(M)$. Indeed, by definition, for each Radon
measure $\mu$, there is a measurable function $\psi_\mu$ such
that $\psi_\mu(b)=\phi(p,b)$ $\mu$-almost everywhere. Since $\mu$
is Radon (and so is complete), and $\psi=\psi_\mu$ almost
everywhere, $\psi$ is $\mu$-measurable (see
\cite[2.11]{Folland}). Then, by Proposition~\ref{NIP-dfn}, the
proof is completed.
\end{proof}

\subsection{Baire 1 definable types}
More results can be reached, if one works in a separable model.
Let $X$ be a Polish space. A function $f:X\to{\mathbb{R}}$ is of
Baire class 1 if it can be written as the pointwise limit of a
sequence of continuous functions. The set of Baire class 1
functions on $X$ is denoted by $B_1(X)$.

\begin{fct}[BFT Criterion for Polish spaces, \cite{BFT}, Corollary~4G]  \label{Polish-compact}
Let $X$ be a Polish space, and $A\subseteq C(X)$ pointwise
bounded set. Then the following are equivalent:
\begin{itemize}
             \item [{\em (i)}] $A$ is relatively compact in $B_1(X)$.
              \item [{\em (ii)}] $A$ is relatively sequentially compact in ${\mathbb{R}}^X$, or $A$ has the RSC.
\end{itemize}
\end{fct}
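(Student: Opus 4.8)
The statement is the Bourgain--Fremlin--Talagrand theorem specialised to a Polish base space, and the plan is to prove the two implications by genuinely different means: $(i)\Rightarrow(ii)$ is ``soft'', whereas $(ii)\Rightarrow(i)$ carries all the combinatorial content.

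For $(i)\Rightarrow(ii)$: since $A$ is pointwise bounded, Tychonoff's theorem makes $K:=cl_p(A)$ a compact subset of $\mathbb R^X$ already, so ``$A$ relatively compact in $B_1(X)$'' amounts simply to $K\subseteq B_1(X)$. I would then quote the basic fact --- one of the main theorems of \cite{BFT}, and precisely the place where Polishness is used --- that a compact subset of $B_1(X)$ for $X$ Polish (a Rosenthal compactum) is sequentially compact. Every sequence in $A$ lies in the sequentially compact set $K$, hence has a pointwise convergent subsequence in $\mathbb R^X$; that is the RSC.

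For $(ii)\Rightarrow(i)$: again $K=cl_p(A)$ is automatically compact, so it is enough to show $K\subseteq B_1(X)$. Suppose toward a contradiction that some $f\in K$ is not of Baire class $1$. By Baire's characterisation theorem on the Polish space $X$, there is a nonempty closed $F\subseteq X$ such that $f|_F$ has no point of continuity; applying the Baire category theorem twice --- first to the closed sets $\{x\in F:\operatorname{osc}(f|_F,x)\geqslant 1/n\}$, then, on the resulting smaller closed set, to the closed sets $\overline{\{f\leqslant p\}}\cap\overline{\{f\geqslant q\}}$ as $p<q$ range over the rationals --- I may replace $F$ by a nonempty closed subset and fix rationals $p<q$ for which both $\{x\in F:f(x)<p\}$ and $\{x\in F:f(x)>q\}$ are dense in $F$. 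Then I would build, by a Cantor-scheme recursion on $n$, nonempty relatively open sets $U_s\subseteq F$ indexed by finite $0$--$1$ strings $s$, with $\overline{U_{s0}}$ and $\overline{U_{s1}}$ disjoint subsets of $U_s$ and $\operatorname{diam} U_s\leqslant 2^{-|s|}$, together with functions $g_n\in A$, as follows: at stage $n$, use density of $\{f<p\}$ and $\{f>q\}$ to pick two points in each of the finitely many $U_s$ with $|s|=n$, use $f\in cl_p(A)$ to choose one single $g_n\in A$ approximating $f$ well at all those finitely many points, and use continuity of $g_n$ to shrink neighbourhoods of the chosen points to $U_{s0},U_{s1}$ on which $g_n<p$, resp.\ $g_n>q$. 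Since $F$, being closed in $X$, is completely metrizable, each branch $\alpha\in 2^\omega$ determines a unique point $x_\alpha$ lying in all the $\overline{U_s}$ along $\alpha$, and then $g_n(x_\alpha)<p$ when $\alpha(n)=0$ and $g_n(x_\alpha)>q$ when $\alpha(n)=1$. Thus $(g_n)$ is an independent sequence in $A$ with parameters $p<q$; more usefully, given any infinite set $\{n_1<n_2<\cdots\}$, the branch $\alpha$ with $\alpha(n_k)$ equal to the parity of $k$ produces a point $x_\alpha$ at which $(g_{n_k}(x_\alpha))_k$ oscillates between values $<p$ and values $>q$, so no subsequence of $(g_n)$ converges pointwise on $X$. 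This contradicts the RSC of $A$; hence $K\subseteq B_1(X)$, which is $(i)$. Note that this direction uses only completeness of $F$ and continuity of the $g_n$, not compactness of $X$ or Fact~\ref{NIP-convergence}.

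The hard part will be the recursion in $(ii)\Rightarrow(i)$: the bookkeeping must be arranged so that a single $g_n\in A$ serves all $2^n$ strings of length $n$ at once, and the successor neighbourhoods must be shrunk to have disjoint closures of small diameter inside the parent node while preserving the inequalities already imposed by $g_0,\dots,g_{n-1}$. This is the function-space incarnation of Rosenthal's $\ell^1$ theorem and is the technical core of \cite{BFT}. A secondary nuisance is the double Baire-category reduction to a single pair of rationals witnessing dense two-sided oscillation of $f|_F$. In the other direction the only substantial ingredient is the sequential compactness of Rosenthal compacta, which I would take from \cite{BFT} rather than reprove.
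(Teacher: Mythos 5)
Your proposal is correct in substance, but note that the paper offers no proof of this statement at all: it is quoted as Corollary~4G of \cite{BFT}, so what you have done is reconstruct the Bourgain--Fremlin--Talagrand/Rosenthal argument that the paper deliberately leaves inside a black box. For (i)~$\Rightarrow$~(ii) your reduction to the sequential compactness of compact subsets of $B_1(X)$ is right, and within the paper's own framework you could obtain it from Fact~\ref{Polish-angelic} together with the standard fact that compact subsets of angelic spaces are sequentially compact, rather than citing \cite{BFT} afresh. For (ii)~$\Rightarrow$~(i), the route through Baire's characterization theorem and the Cantor-scheme recursion is the genuine content of the theorem and is sound: the Baire-category reduction does yield a nonempty closed $F'$ and rationals $p<q$ with $\{f<p\}$ and $\{f>q\}$ both dense in $F'$ (one passes to the closure of the nonempty relative interior of the winning set $\overline{\{f\leqslant p\}}\cap\overline{\{f\geqslant q\}}$ and checks that density survives, which it does), and completeness of $F'$ then produces the branch points $x_\alpha$. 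Two points of hygiene: on the closures $\overline{U_{s0}},\overline{U_{s1}}$ you only get $g_n\leqslant p$ and $g_n\geqslant q$ rather than strict inequalities, which suffices for the oscillation argument; and, as you observe, the alternating-branch argument kills pointwise convergence of every subsequence directly, so full independence and Fact~\ref{NIP-convergence} are not needed. The trade-off is the expected one: the paper keeps the combinatorial core hidden in the citation, while you expose it, at the cost that the stage-$n$ bookkeeping (one $g_n$ serving all $2^n$ nodes simultaneously, with shrinkage preserving the constraints already imposed by $g_0,\dots,g_{n-1}$) is precisely where all the work lives and would need to be written out in full.
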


Fremlin's notion of an angelic topological space is as follows: a
regular Hausdorff space X is {\em angelic} if (i) every
relatively countably compact set in $X$  is relatively compact,
(ii) the closure of a relatively compact set is precisely the set
of limits of its sequences. The following is the principal result
of \cite{BFT}.

\begin{fct}[\cite{BFT}, Theorem~3F] \label{Polish-angelic}
If $X$ is a Polish space, then $B_1(X)$ is angelic under the
topology of pointwise convergence.
\end{fct}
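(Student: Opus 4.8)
The plan is to establish the two defining properties of an angelic space for $B_1(X)$ in the topology of pointwise convergence: (i) every relatively countably compact $Y\subseteq B_1(X)$ is relatively compact, and (ii) for such $Y$ the pointwise closure $\overline{Y}$ coincides with the set of pointwise limits of sequences from $Y$. First I would observe that a relatively countably compact $Y$ is automatically pointwise bounded (otherwise a sequence with $f_n(x_0)\to\infty$ would have no cluster point in $\mathbb{R}^X$), so $\overline{Y}$ is already a compact subset of $\mathbb{R}^X$; the real content is that $\overline{Y}\subseteq B_1(X)$ and that $\overline{Y}$ is the \emph{sequential} closure of $Y$. By the classical ``angelic lemma'' of Fremlin (an iterated-extraction/diagonal argument), both (i) and (ii) follow once one knows the single statement: \emph{every sequence in a relatively countably compact subset of $B_1(X)$ has a pointwise convergent subsequence, whose limit again lies in $B_1(X)$}. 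Since $\mathbb{R}^X$ is not metrizable when $X$ is uncountable, applying this lemma forces one to deal with only countably many coordinates at a time and to use that a Baire class $1$ function on a Polish space is, in a suitable sense, controlled by its restrictions to separable subspaces; I would package this together with an elementary-substructure/exhaustion argument to stitch the countable pieces back together. (Here Fact~\ref{Polish-compact} already gives, for subsets of $C(X)$, the equivalence of the RSC with relative compactness in $B_1(X)$; part of the work is to propagate it to subsets of $B_1(X)$ itself.)

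The heart of the proof is a Rosenthal-type dichotomy for bounded sequences $(f_n)$ in $B_1(X)$: \emph{either $(f_n)$ has a pointwise convergent subsequence, or it has a subsequence that is independent} in the sense of Definition~\ref{NIP-family} for some rationals $s<r$, which in turn is the same as containing an $\ell^1$-subsequence as in Fact~\ref{NIP-convergence}. To prove the dichotomy I would argue contrapositively. If no subsequence of $(f_n)$ converges pointwise, then after passing to a subsequence there are rationals $s<r$ such that the set $\{x: \liminf_n f_n(x)\le s<r\le\limsup_n f_n(x)\}$ is non-meagre; since this set has the Baire property (each $f_n$ is Baire class $1$, hence has a comeagre set of continuity points, and the $\limsup$/$\liminf$ sets are built from $F_\sigma$ sets), it is comeagre in some nonempty open set $U$. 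A tree/fusion construction inside $U$ then selects a subsequence whose superlevel sets $\{f_n\ge r\}$ and sublevel sets $\{f_n\le s\}$ realise every finite Boolean pattern, i.e. an independent subsequence.

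To close the argument I would rule out the independent alternative for a sequence drawn from a relatively countably compact $Y\subseteq B_1(X)$. An independent sequence $(f_{n_k})$ with constants $s<r$ admits, by a Fubini/random-choice argument, a Radon probability measure $\mu$ on $X$ such that some cluster point $g$ of $(f_{n_k})$ in $\mathbb{R}^X$ fails to be $\mu$-measurable — this is exactly the mechanism behind Facts~\ref{NIP-convergence} and~\ref{BFT}. A non-measurable function is not of Baire class $1$, so $g\in\overline{Y}\setminus B_1(X)$, contradicting relative countable compactness of $Y$ in $B_1(X)$. Hence every sequence in $Y$ has a pointwise convergent subsequence, and its limit, being a cluster point of a sequence from $Y$, lies in $B_1(X)$ by hypothesis. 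Feeding this back into the angelic lemma yields (i) and (ii), so $B_1(X)$ is angelic.

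I expect the combinatorial extraction in the second paragraph — producing the independent subsequence once pointwise convergence has failed — to be the main obstacle; it is precisely the place where the Polish hypothesis is genuinely used, through the Baire category theorem to localise the oscillation and a fusion scheme to build the Boolean-independent subsequence. A secondary and more technical difficulty is the globalisation in the first paragraph: because $\mathbb{R}^X$ is not a sequential space for uncountable $X$, transferring ``relatively sequentially compact'' to ``angelic'' requires working simultaneously through separable subspaces of $X$ and countable subfamilies of $Y$, which is where an elementary-submodel bookkeeping argument becomes essential.
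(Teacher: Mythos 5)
The paper does not actually prove this statement: Fact~\ref{Polish-angelic} is quoted verbatim from \cite{BFT} (Theorem~3F), so your proposal has to be judged as a reconstruction of that theorem. The half you develop in detail is sound in outline and close to the real thing: a Rosenthal-type dichotomy (pointwise convergent subsequence versus an independent subsequence), localized via the Baire property of Baire class~$1$ functions on a Polish space, combined with the production of a non-universally-measurable cluster point from an independent sequence, does give that a relatively countably compact $Y\subseteq B_1(X)$ is relatively sequentially compact with all sequential limits in $B_1(X)$, and (with a little extra care, since relative countable compactness only guarantees \emph{some} cluster point in $B_1(X)$, one must argue that an independent subsequence has \emph{no} measurable cluster point) that $\mathrm{cl}_p(Y)\subseteq B_1(X)$, i.e.\ property (i) of angelicity.

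The genuine gap is in your very first reduction. Property (ii) of angelicity --- that every point of the closure of a relatively compact set is the limit of a \emph{sequence} from the set, i.e.\ the Fr\'echet--Urysohn property of compact subsets of $B_1(X)$ --- does not follow from ``every sequence in a relatively countably compact set has a pointwise convergent subsequence with limit in $B_1(X)$'' by any soft ``angelic lemma.'' Sequential compactness does not imply the Fr\'echet--Urysohn property: the ordinal space $[0,\omega_1]$ is compact and sequentially compact, yet $\omega_1$ lies in the closure of $[0,\omega_1)$ without being the limit of any sequence from it; even adding countable tightness does not suffice (one-point compactifications of Mr\'owka $\Psi$-spaces are compact, sequentially compact, sequential, and not Fr\'echet--Urysohn). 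Fremlin's angelic lemma requires an ambient \emph{coarser angelic topology} to bootstrap from, and none is available a priori for $B_1(X)\subseteq\mathbb{R}^X$: restriction to a countable dense subset of $X$ maps into a metrizable (hence angelic) space but is not injective on $B_1(X)$, which is precisely why Theorem~3F is hard. In \cite{BFT} the Fr\'echet--Urysohn property is the main content of Theorem~3F and is established by a separate, substantially more delicate argument (a transfinite exhaustion over the Polish space, not a diagonal extraction). So your sketch establishes, at best, property (i) plus relative sequential compactness --- essentially Fact~\ref{Polish-compact} extended from $C(X)$ to $B_1(X)$ --- but the heart of the angelicity statement is missing.
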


Let $M$ be a structure and $\phi(x,y)$ a formula. A Baire class 1
function $\psi:S_{\tilde\phi}(M)\to{\mathbb{R}}$ defines $p\in
S_\phi(M)$ if $\phi(p,b)=\psi(b)$ for all $b\in M$. We say $p$ is
Baire 1 definable if some Baire class 1 function $\psi$ defines
it. The following is another criterion for NIP.

\begin{thm}[Baire 1 definability of types] \label{Baire-dfn}
 Let $\phi(x,y)$ be a NIP formula
 on a separable model $M$. Then every $p\in S_{\phi}(M)$ is
definable by a Baire 1 function $\psi(y)$ on $S_{\tilde\phi}(M)$.
\end{thm}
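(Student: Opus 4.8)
The plan is to realise the required defining function $\psi$ as a point of the pointwise closure $cl_p(A)$ of the family
$A=\{\phi(a,\cdot):S_{\phi(y)}(M)\to\mathbb{R}\mid a\in M\}$, and then to exploit that for a \emph{separable} $M$ the type space $S_{\phi(y)}(M)$ is Polish, so that the Polish-space versions of the BFT machinery (Facts~\ref{Polish-compact} and \ref{Polish-angelic}) apply directly. First I would verify this metrizability: if $D\subseteq M$ is countable and dense, then by uniform continuity of $\phi$ in its first variable (the modulus $\Delta_\phi$), for every $a\in M$ and $\epsilon>0$ there is $a'\in D$ with $|\phi(a,q)-\phi(a',q)|\leqslant\epsilon$ for \emph{all} $q$; hence the evaluation map $q\mapsto(\phi(a',q))_{a'\in D}$ is a homeomorphic embedding of $S_{\phi(y)}(M)$ onto a closed subset of $[-\flat_\phi,\flat_\phi]^{D}$, so $S_{\phi(y)}(M)$ is compact metrizable, i.e. Polish.

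Next, since $\phi$ is NIP on $M$, Definition~\ref{NIP-formula} (with the Remark that follows it, allowing $S_{\phi(y)}(M)$ in place of $S_y(M)$) tells us that $A$ has the RSC; it is also uniformly bounded by $\flat_\phi$ and contained in $C(S_{\phi(y)}(M))$, since each $\phi(a,\cdot)$ is continuous by definition of the logic topology. Applying Fact~\ref{Polish-compact} with $X=S_{\phi(y)}(M)$ gives that $A$ is relatively compact in $B_1(S_{\phi(y)}(M))$; in particular $cl_p(A)\subseteq B_1(S_{\phi(y)}(M))$, so every element of $cl_p(A)$ is of Baire class $1$. Since $B_1(S_{\phi(y)}(M))$ is angelic (Fact~\ref{Polish-angelic}), relative compactness of $A$ in it yields, in addition, that each element of $cl_p(A)$ is the pointwise limit of a \emph{sequence} taken from $A$ --- that is, a pointwise limit of functions $\tilde\phi(\cdot,a_n)=\phi(a_n,\cdot)$ with $a_n\in M$, each bounded by $\flat_\phi$.

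It remains, for a fixed $p\in S_{\phi(x)}(M)$, to find an element of $cl_p(A)$ that defines $p$. Here I would use that the $\phi(x)$-types realised by elements of $M$ are dense in $S_{\phi(x)}(M)$ (as already used in the proof of Proposition~\ref{NIP-dfn}, and established by an $\inf_x$ computation in an elementary extension realising $p$): pick a net $(a_i)$ in $M$ with $p_{a_i}\to p$ in $S_{\phi(x)}(M)$, where $p_{a_i}=\mathrm{tp}_\phi(a_i/M)$, so that $\phi(a_i,b)\to\phi(p,b)$ for every $b\in M$. As $cl_p(A)$ is compact in $\mathbb{R}^{S_{\phi(y)}(M)}$ (it is pointwise bounded), the net $(\phi(a_i,\cdot))$ has a subnet converging pointwise to some $\psi\in cl_p(A)$; restricting to $M$ gives $\psi(b)=\phi(p,b)$ for all $b\in M$, so $\psi$ defines $p$. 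By the previous paragraph this $\psi$ is of Baire class $1$ and is a $\tilde\phi$-Baire definable relation over $M$, which is the claim. (Alternatively, one can simply invoke the proof of Proposition~\ref{NIP-dfn}, which already produces such a $\psi\in cl_p(A)$, and then use Facts~\ref{Polish-compact} and \ref{Polish-angelic} to upgrade ``universally measurable'' to ``Baire class $1$, represented by a sequence of $\tilde\phi$-instances''.)

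The only step requiring genuine care is the first one, the implication ``$M$ separable $\Rightarrow$ $S_{\phi(y)}(M)$ Polish'': the Polish-space hypothesis is precisely what makes Facts~\ref{Polish-compact} and \ref{Polish-angelic} available, and everything after it is a routine assembly of results already in the paper. A secondary thing to watch is the bookkeeping of variables: $\psi$ lives on the $y$-type space $S_{\phi(y)}(M)=S_{\tilde\phi}(M)$, and the continuous functions witnessing its Baire-$1$ representation are instances $\tilde\phi(\cdot,a_n)$ of the dual formula, which is exactly why the conclusion is stated in terms of $\tilde\phi$.
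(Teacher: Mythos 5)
Your proof is correct, and it uses the same two key ingredients as the paper (Facts~\ref{Polish-compact} and \ref{Polish-angelic} together with density of realized types and a subnet argument), but you apply them to a different Polish space. The paper takes $X=M$ itself, which is Polish for free (a separable complete metric structure), obtains $\psi\in B_1(M)$ as a limit of a sequence $\phi^{a_n}$, and only afterwards (in the Remark following the theorem) lifts $\psi$ to a function $\widehat{\psi}$ on the type space. You instead first prove that $S_{\phi(y)}(M)$ is itself compact metrizable --- via the uniform continuity modulus $\Delta_\phi$ in the first variable and a countable dense $D\subseteq M$, embedding the type space into $[-\flat_\phi,\flat_\phi]^{D}$ --- and then run the BFT argument directly on the type space. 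Your metrizability step is sound (the countable family $\{\phi(a',\cdot):a'\in D\}$ is uniformly dense in $\{\phi(a,\cdot):a\in M\}$, hence separates points and generates the same topology, and a continuous bijection from a compact space to a Hausdorff space is a homeomorphism). What your route buys is that the defining relation $\psi$ is produced as a Baire class $1$ function on all of $S_{\phi(y)}(M)$ in one stroke, which matches the paper's own definition of ``Baire $1$ definable'' (which asks for $\psi$ on the type space, not merely on $M$) without needing the supplementary Remark; the cost is the extra, though routine, metrizability argument. Both arguments are valid and yield the stated theorem.
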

\begin{proof} The proof is an easy consequence of
Fact~\ref{Polish-compact}. Suppose that $p_{a_i}\to p\in
S_{\phi}(M)$ where $a_i\in M$. (Recall that the set of all types
realized in $M$ is dense in $S_{\phi}(M)$.) For each $a\in M$,
define $\phi^a:M\to {\Bbb R}$ by $\phi^a(b)=\phi(a,b)$.
 Since $\phi$ is
NIP, the set $\hat{A}=\{\phi(a,y):S_y(M)\to{ \mathbb{R}}:a\in
M\}$ is relatively sequentially compact in ${\Bbb R}^{S_y(M)}$,
and in  particular  the set $A=\{\phi^a:a\in M\}$ is relatively
sequentially compact in ${\Bbb R}^M$. Now by
Fact~\ref{Polish-compact}, since $M$ is Polish,  also $A$ is
relatively compact in $B_1(M)$. Thus, there is a $\psi\in B_1(M)$
such that $\phi^{a_i}\to \psi$, so $p$ is definable by a Baire
class 1 function. Moreover, since $B_1(M)$ is angelic, there is
some sequence $\phi^{a_n},a_n\in M$ such that $\phi^{a_n}\to\psi$.
\end{proof}

\begin{rmk} \label{NIP=Baire 1}  Note that one can say
more: $\phi$ is NIP on $M$ if and only if every coheir is Baire 1
definable. This is discussed in detail in \cite{KP}.
\end{rmk}



\noindent\hrulefill

\section{SOP} \label{5}
 In this section we work in the classical
logic. One reason for restricting our attention to the classical
case is to make this section  more accessible to model-theorists
and other interested readers.

 In \cite{Sh} Shelah introduced the
strict order property as complementary to the independence
property: a theory has OP iff it has IP or SOP. In functional
analysis, the Eberlein-\v{S}mulian theorem states that a subset
of a Banach space is not relatively weakly compact iff it has a
sequence without any weak Cauchy subsequence or it has a weak
Cauchy sequence with no weak limit. In fact there is a
correspondence  between the Eberlein-\v{S}mulian theorem and
Shelah's result above. To determine this correspondence, we first
give a topological description of the strict order property, and
then study the above dividing line.

 In classical ($\{0,1\}$-valued) model
theory a formula $\phi(x,y)$ has the {\em strict order property}
(or short SOP) if there exists a sequence $(a_i:i<\omega)$ in the
monster model $\mathcal{U}$ such that for all $i<\omega$,
$$\phi({\mathcal{U}},a_i)\subsetneqq\phi({\mathcal U},a_{i+1}).$$
The acronym SOP stands for the strict order property and NSOP is
its negation. We can assume that $\phi(x,y)$ is a 0-1 valued
function on $\mathcal U$ such that $\phi(a,b)=1$ iff
$\models\phi(a,b)$. Then $\phi(x,y)$ has the strict order
property if and only if there are sequences
$(a_i,b_j:i,j<\omega)$ in $\mathcal U$ such that for each
$b\in{\mathcal U}$, the sequence $\{\phi(b,a_i)\}_i$ is
increasing -- therefore the pointwise limit
$\psi(x):=\lim_i\phi(x,a_i)$ is well-defined -- and
$\phi(b_j,a_j)<\phi(b_j,a_{j+1})$ for all $j<\omega$.

Now, suppose that the $\phi(x,a_i)$  are continuous functions on
$S_\phi({\mathcal U})$, the space of all complete $\phi$-types.
Suppose that $\phi$ has not the SOP, and
$\phi(x,a_i)\nearrow\psi(x)$. Then $\psi:S_\phi({\mathcal
U})\to\{0,1\}$ is continuous, because there is a $k$ such that
$\phi(x,a_k)=\phi(x,a_{k+1})=\cdots$. Conversely, suppose that
$\phi(x,a_i)\nearrow\psi(x)$ and $\psi$ is continuous. It is a
standard fact that an  increasing sequence of continuous functions
on a compact space which converges to a continuous function
converges uniformly (Dini's Theorem). Therefore, our sequence is
eventually constant, because the logic is 0-1 valued.

Therefore, it seems right to say that the SOP in classical logic
  is equivalent to the existence of a
pointwise convergent sequence (not necessary increasing) of
continuous functions such that its limit is not continuous. Our
next goal is to convince the reader that by a technical
consideration this is indeed the case.

\medskip
In functional analysis, a Banach space $X$ is called {\em weakly
sequentially complete} if every weak Cauchy sequence has a weak
limit. Similarly we define the following notion and
 will observe that this notion corresponds to
NSOP on the model-theoretic side.

\begin{dfn}  Let $X$ be a topological space and  $F\subseteq C(X)$. We say that $F$ has
the {\em weak sequential completeness property} (or short SCP) if
the limit of each pointwise convergent sequence $\{f_n\}\subseteq
F$ is continuous.
\end{dfn}

As we will see shortly, the following statement is a
generalization of a well known model theoretic fact, i.e. SOP
implies OP.

\begin{fct} \label{SOP->OP}
Let $X$ be a compact space and $F\subseteq C(X)$ a bounded subset.
If $F$ is relatively weakly compact in $C(X)$, then $F$ has the
SCP.
\end{fct}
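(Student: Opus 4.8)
The plan is to reduce the assertion to the single fact that the pointwise limit of a sequence drawn from $F$ is continuous, and then to finish with an elementary cluster-point argument.

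First I would produce the continuity of the limit. There are two routes, and I would state the one the paper is already set up for: by the form of Grothendieck's criterion established in Appendix~\ref{app-A} (cf.\ Fact~\ref{Fact2}), for a compact space $X$ a bounded set $F\subseteq C(X)$ is relatively weakly compact in $C(X)$ if and only if its pointwise closure $cl_p(F)$, computed in ${\Bbb R}^X$, is contained in $C(X)$. Hence, if $\{f_n\}\subseteq F$ and $f_n\to f$ pointwise, then $f\in cl_p(F)\subseteq C(X)$, so $f$ is continuous. Alternatively one can argue directly from Eberlein-\v{S}mulian: relative weak compactness gives a subsequence $f_{n_k}$ converging weakly in $C(X)$ to some $g\in C(X)$; since each evaluation functional $\delta_x\colon h\mapsto h(x)$ lies in $C(X)^*$, weak convergence forces pointwise convergence, so $g$ is the pointwise limit of $\{f_{n_k}\}$, which must coincide with $f$. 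Either way, $f\in C(X)$.

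Second, with $f$ continuous I would verify the SCP. Let $\{a_m\}\subseteq X$ with $L:=\lim_m f(a_m)$ existing, and let $a$ be a cluster point of $\{a_m\}$. Pass to a subnet $(a_{m_i})$ with $a_{m_i}\to a$ in $X$ (possible since we are dealing with a net in a topological space and $a$ is a cluster point). Continuity of $f$ gives $f(a_{m_i})\to f(a)$; on the other hand $(f(a_{m_i}))$ is a subnet of the convergent real sequence $(f(a_m))$, hence converges to $L$. By uniqueness of limits in ${\Bbb R}$, $L=f(a)$, which is exactly the conclusion required by the definition of SCP. I would also add the parenthetical remark that this is the functional-analytic shadow of ``SOP $\Rightarrow$ OP'', with relative weak compactness of $F$ standing in for stability, paralleling Fact~\ref{IP->OP}.

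The only genuine subtlety — and the step I would be most careful to phrase correctly — is the passage to a convergent subnet in the possibly non-metrizable compact space $X$ together with the (standard) fact that a subnet of a convergent real sequence has the same limit; this is precisely where the hypothesis ``$\lim_m f(a_m)$ exists'' is used rather than merely ``$\{f(a_m)\}$ has a cluster point''. Beyond that, everything is a direct invocation of Grothendieck's criterion / Appendix~\ref{app-A}, so no nontrivial estimate is needed.
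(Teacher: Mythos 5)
Your proof is correct and is exactly the argument the paper intends (it states this as a Fact without proof): relative weak compactness plus Grothendieck's lemma from Appendix~\ref{app-A} gives $cl_p(F)\subseteq C(X)$, so any pointwise limit $f$ of a sequence from $F$ is continuous, and then the cluster-point/subnet argument yields $\lim_m f(a_m)=f(a)$; this is the same route as the ``evident'' converse direction of Proposition~\ref{SCP-continuous}. No gaps.
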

\begin{proof}
Suppose that $F$ is relatively weakly compact in $C(X)$, and
$\{f_n\}$ is a sequence in $F$ which pointwise converges to $f$.
Since the pointwise topology and weak topology are the same (see
Fact~\ref{Grothendieck-lemma} above), so $f$ as a cluster point of
$\{f_n\}$ is continuous.
\end{proof}

 The next result  is
another application of the Eberlein-Grothendieck criterion:

\begin{thm} \label{nip+scp=stable}
Let $X$ be a compact space and $A\subseteq C(X)$ be bounded. Then
$A$ is  relatively weakly compact in $C(X)$ iff it has RSC and
SCP.
\end{thm}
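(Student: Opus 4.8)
The plan is to prove Theorem~\ref{nip+scp=stable} by invoking Grothendieck's criterion (Fact~\ref{Fact2}) together with the classical Eberlein--\v{S}mulian theorem, phrased as a clean topological dichotomy: a bounded $A\subseteq C(X)$ fails to be relatively weakly compact precisely when one of two things happens --- either some sequence in $A$ has no pointwise convergent subsequence, or some sequence in $A$ converges pointwise to a function lying outside $C(X)$. The forward direction is already recorded: if $A$ is relatively weakly compact then it has RSC by Fact~\ref{IP->OP} and SCP by Fact~\ref{SOP->OP}, so nothing new is needed there. The content is the converse.

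For the converse, suppose $A$ has both RSC and SCP but is \emph{not} relatively weakly compact in $C(X)$. I would argue as follows. By RSC, every sequence in $A$ has a pointwise convergent subsequence in $\mathbb R^X$; combined with the uniform boundedness of $A$, this says the pointwise closure $\mathrm{cl}_p(A)$ in $\mathbb R^X$ is pointwise compact. Next I would use the SCP to show $\mathrm{scl}(A)\subseteq C(X)$ (this is Proposition~\ref{SCP-continuous}, which may be cited), i.e. every pointwise limit of a \emph{sequence} from $A$ is continuous. Now the crux: I want to upgrade ``every sequential pointwise limit is continuous'' to ``$\mathrm{cl}_p(A)\subseteq C(X)$'', which by Grothendieck's criterion gives relative weak compactness and hence the desired contradiction. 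The bridge is an angelicity/countable-determination argument for the relevant function space --- exactly the sort of statement supplied by the Bourgain--Fremlin--Talagrand circle of ideas. Concretely: since $A$ has the RSC, Fact~\ref{BFT} (or Fact~\ref{NIP-convergence}) tells us $A$ is NIP, so $\mathrm{cl}_p(A)$ consists of universally measurable functions and, more to the point, $A$ sits inside an angelic space of functions under pointwise convergence; in an angelic space the closure of a relatively compact set equals the set of limits of its sequences. Therefore every $g\in\mathrm{cl}_p(A)$ is already a pointwise limit of a \emph{sequence} from $A$, hence continuous by the SCP. Thus $\mathrm{cl}_p(A)\subseteq C(X)$, and this pointwise-compact subset of $C(X)$ is weakly compact by Grothendieck, contradicting the assumption.

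The main obstacle --- and the step I would spend the most care on --- is justifying that angelicity (``every point of the pointwise closure is a sequential limit'') applies here. For general compact $X$, $B_1(X)$ need not be angelic, so I cannot blindly quote Fact~\ref{Polish-angelic}; instead I must use that $A$ itself is a \emph{sequence-wise well-behaved}, NIP set. The correct tool is the Bourgain--Fremlin--Talagrand theorem in its full form (Fact~\ref{BFT}), which for a uniformly bounded $F\subseteq C(X)$ with the RSC gives relative compactness of $F$ in $\mathbf M_r(X)$ \emph{and} the angelic-type conclusion that the pointwise closure is reached by sequences --- this is the ``$(ii)\Leftrightarrow(iv)$'' equivalence of Theorem~2F of~\cite{BFT} quoted in the proof of Fact~\ref{BFT}. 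So the honest route is: RSC $\Rightarrow$ NIP $\Rightarrow$ (BFT) the closure $\mathrm{cl}_p(A)$ is sequentially determined; then SCP forces that closure into $C(X)$; then Grothendieck closes the loop. Alternatively, if one prefers to avoid measure theory entirely, one can replace the BFT step by Eberlein--\v{S}mulian applied directly to the weak closure: any $g$ in the weak-closure of $A$ in $C_b(X)^{**}$ is a weak-$*$ limit of a sequence from $A$ (by the separable-determination form of Eberlein--\v{S}mulian, since $A$ is RSC), hence a pointwise limit of a sequence, hence continuous by SCP --- giving $g\in C(X)$ and weak compactness. Either way, the single delicate point is the ``sequences suffice'' passage, and I would present it via the cited BFT equivalence to keep the argument self-contained within the paper's toolkit.

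Finally I would note, as a remark rather than a proof step, that this theorem is precisely the function-space shadow of the model-theoretic decomposition ``OP $\iff$ IP or SOP'': $A$ relatively weakly compact corresponds to stability, RSC to NIP, and SCP to NSOP, so the theorem reads ``stable $\iff$ NIP and NSOP'', which is the analogue of Shelah's theorem and of Eberlein--\v{S}mulian that the paper is building toward.
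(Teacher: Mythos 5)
Your forward direction and overall framing are fine, but the key step of your converse --- that RSC (equivalently, NIP) forces the pointwise closure $\mathrm{cl}_p(A)$ to be sequentially determined, so that SCP can then be applied to every element of that closure --- is false for a general compact Hausdorff $X$, and Fact~\ref{BFT} does not supply it. The angelicity statement of Bourgain--Fremlin--Talagrand (Fact~\ref{Polish-angelic}) is specifically for Polish $X$; the equivalences in Fact~\ref{BFT} concern relative compactness in $\mathbf{M}_r(X)$ and almost-everywhere convergent subsequences, not the assertion that every point of the pointwise closure in $\mathbb{R}^X$ is the limit of a sequence from $A$. A concrete counterexample to your bridge: take $X=[0,\omega_1]$ with the order topology and $A=\{\chi_{[0,\alpha]}:\alpha<\omega_1\}\subseteq C(X)$. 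Every sequence of countable ordinals has a nondecreasing subsequence, so every sequence in $A$ has a pointwise convergent subsequence and $A$ has the RSC (hence is NIP); yet $\chi_{[0,\omega_1)}$ lies in $\mathrm{cl}_p(A)$ and is not the pointwise limit of any sequence from $A$, by regularity of $\omega_1$. So ``sequences suffice'' cannot be extracted from RSC alone, and your main route does not close. (Your Eberlein--\v{S}mulian alternative is also circular as stated: separable determination of the weak closure is a consequence of the weak compactness you are trying to prove.)

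The repair is either of two things. The paper's own proof avoids the pointwise closure altogether and verifies Grothendieck's double-limit condition (Fact~\ref{Criterion}) directly: given $\{f_n\}\subseteq A$ and $\{a_m\}\subseteq X$ with both iterated limits existing, RSC yields a subsequence $f_{n_k}\to f$ pointwise, whence $\lim_m\lim_nf_n(a_m)=\lim_mf(a_m)$ and $\lim_n\lim_mf_n(a_m)=f(a)$ for a cluster point $a$ of $\{a_m\}$, and SCP says precisely that $\lim_mf(a_m)=f(a)$. Alternatively, your sketch can be salvaged without any angelicity: by RSC every sequence in $A$ has a pointwise convergent subsequence, by Proposition~\ref{SCP-continuous} its limit is continuous, hence by dominated convergence against Radon measures the subsequence converges weakly in $C(X)$; the (hard direction of the) Eberlein--\v{S}mulian theorem then gives relative weak compactness. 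Either way, sequential determination of $\mathrm{cl}_p(A)$ is never needed and should be dropped from the argument.
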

\begin{proof} First we show that $cl_p(A)\subseteq C(X)$ if every sequence of
$A$ has a convergent subsequence in ${ \mathbb{R}}^X$ and the
limit of every convergent sequence of $A$ is continuous. Suppose
that $A$ has RSC and SCP. Let $\{f_n\}_n\subseteq A$ and
$\{a_m\}_m\subseteq X$, and suppose that the double limits
$\lim_m\lim_n f_n(a_m)$ and $\lim_n\lim_m f_n(a_m)$ exist. Let
$a$ be a cluster point of $\{a_m\}_m$. By RSC, there is a
convergent subsequence $f_{n_k}$ such that $f_{n_k}\to f$.
Therefore $\lim_m\lim_{n_k} f_{n_k}(a_m)=\lim_mf(a_m)$ and
$\lim_{n_k}\lim_m f_{n_k}(a_m)=\lim_{n_k}f_{n_k}(a)=f(a)$. By SCP,
$\lim_mf(a_m)=f(a)$. Since the double limits exist, it is easy to
verify that $\lim_m\lim_n f_{n}(a_m)=\lim_m\lim_{n_k}
f_{n_k}(a_m)$ and $\lim_n\lim_m f_{n}(a_m)=\lim_{n_k}\lim_m
f_{n_k}(a_m)$. So $A$ has the double limit property and thus it
is relatively weakly compact in $C(X)$. The converse follows from
Facts~\ref{IP->OP} and \ref{SOP->OP}.
\end{proof}

\begin{proposition} \label{SCP->NSOP}
If the set $\{\phi(x,a):a\in\mathcal{U}\}$ has the SCP, then
$\phi(x,y)$ is NSOP.
\end{proposition}
\begin{proof}  Suppose, for a contradiction, that  $\{\phi(x,a):a\in\mathcal{U}\}$ has the SCP
 and $\phi$ is SOP. By SOP, there are $(a_ib_i:i<\omega)$ in the monster model $\mathcal U$ such that $\phi({\mathcal U},a_i)\leqslant\phi({\mathcal
U},a_{i+1})$ and $\phi(b_j,a_i)<\phi(b_i,a_j)$ for all $i<j$. Let
$b$ be a cluster point of $\{b_i\}_{i<\omega}$. By SCP,
$\phi(S_\phi({\mathcal U}),a_i)\nearrow\psi$ and $\psi$ is
continuous. But
$\lim_i\lim_j\phi(b_j,a_i)=0<1=\lim_i\lim_j\phi(b_i,a_j)$ and by
continuity $\psi(b)<\psi(b)$, a contradiction.
\end{proof}

The following example shows that the converse does not hold in
analysis. It was suggested to us by  M\'{a}rton Elekes.

\begin{exa} \label{exa}
 Let $X$ be the Cantor set. Let $H=\{0\}\cup(X\cap(2/3,1))$. (We
note that $H$ is $\Delta_2^0$, i.e. it is $F_\sigma$ and
$G_\delta$ at the same time, but neither open nor closed.)  Then
it is easy to see that there exists a sequence $H_n$ of clopen
subsets of $X$ such that if $f_n$ is the characteristic function
of $H_n$ and $f$ is the characteristic function of $H$ then
$f_n\to f$ pointwise. Let $A =\{f_n :n<\omega\}$. Then all $f_n$
are continuous, uniformly bounded (even 0-1 valued), the
pointwise closure is $A\cup\{f\}$ (which are all Baire class 1
functions), and all monotone sequences in $A$ are eventually
constant: indeed, if there were a true monotone subsequence then
its limit would be the characteristic function of an open or a
closed set, but $H$ is neither open nor closed. Also, we note that
$A$ has the RSC  but it is not relatively weakly compact in
$C(X)$.
\end{exa}

Again we give a topological presentation of a model theoretic
property. For this, we need some definitions. Let $M$ be a
saturated enough structure and $\phi:M\times M\to\{0,1\}$ a
formula. For subsets $B,D\subseteq M$, we say that $\phi(x,y)$
has the {\em order property on } $B\times D$ (short OP on
$B\times D$) if there are   sequences $(a_i)\subseteq B$,
$(b_i)\subseteq D$ such that $\phi(a_i,b_j)$ holds if and only if
$i<j<\omega$. We will say that $\phi(x,y)$ has the {\em NIP on
$B\times D$}, if for the set $A=\{\phi(a,y):S_y(D)\to\{0,1\}~|a\in
B\}$, any of the cases in Lemma~\ref{equivalence} holds.

\begin{proposition} \label{NSOP=SCP}
Suppose that $T$ is a theory. Then the following are equivalent:
\begin{itemize}
             \item [{\em (i)}] $T$ is NSOP.
             \item [{\em (ii)}] For each indiscernible sequence $(a_n)_{n<\omega}$ and
formula $\phi(x,y)$, if the sequence $(\phi(x,a_n))_{n<\omega}$
pointwise converges on $S_\phi(\mathcal{U})$, then its limit is
continuous.
\end{itemize}
\end{proposition}
\begin{proof} (i) $\Rightarrow$ (ii): Suppose that there are an
indiscernible sequence $(a_n)_{n<\omega}$ and a formula
$\phi(x,y)$ such that the sequence $(\phi(x,a_n))_{n<\omega}$
pointwise converges but its limit is not continuous. Since the
limit is not continuous, $\tilde{\phi}(y,x)=\phi(x,y)$ has OP on
$\{a_n\}_{n<\omega}\times S_\phi({\mathcal U})$. Since every
sequence in $\{\phi(x,a_n)\}_{n<\omega}$ has a pointwise
convergent subsequence, $\tilde{\phi}(y,x)$ is NIP on
$\{a_n\}_{n<\omega}\times S_\phi({\mathcal U})$.  The following
argument is classic (see   \cite{Poi} and \cite{S}).  Since
$\tilde{\phi}(y,x)$ has OP, there  is a sequence $\{b_N\}\subseteq
S_\phi({\mathcal U})$ such that $\tilde\phi(a_i,b_N)$ holds if
and only if $i<N$.
 By NIP,
there is some integer $n$ and $\eta : n \rightarrow \{0,1\}$ such
that $\bigwedge_{i<n} \tilde\phi(a_i,x)^{\eta(i)}$ is
inconsistent. (Recall  that for a formula $\varphi$, we use the
notation $\varphi^0$ to mean $\neg\varphi$ and $\varphi^1$ to mean
$\varphi$.) Starting with that formula, we change one by one
instances of $\neg\tilde\phi(a_i,x) \wedge \tilde\phi(a_{i+1},x)$
to $\tilde\phi(a_i,x) \wedge \neg\tilde\phi(a_{i+1},x)$. Finally,
we arrive at a formula of the form $\bigwedge_{i<N}
\tilde\phi(a_i,x) \wedge \bigwedge_{N\leq i<n}
 \neg\tilde\phi(a_i,x)$. The tuple $b_N$ satisfies that formula.
 Therefore, there is some
$i_0<n$, $\eta_0 : n \rightarrow \{0,1\}$ such that
$$\bigwedge_{i\neq i_0, i_0+1} \tilde\phi(a_i,x)^{\eta_0(i)} \wedge \neg\tilde\phi(a_{i_0},x) \wedge \tilde\phi(a_{i_0+1},x)$$
is inconsistent, but
$$\bigwedge_{i\neq i_0, i_0+1} \tilde\phi(a_i,x)^{\eta_0(i)} \wedge \tilde\phi(a_{i_0},x) \wedge \neg\tilde\phi(a_{i_0+1},x)$$
is consistent. Let us define $\varphi(\bar a,x)=\bigwedge_{i\neq
i_0,i_0+1} \tilde\phi(a_i,x)^{\eta_0(i)}$.  Increase the sequence
$(a_i : i<\omega)$ to an indiscernible sequence $(a_i:i\in
\mathbb Q)$. Then for $i_0 \leq i<i' \leq i_0+1$, the formula
$\varphi(\bar a,x) \wedge \tilde\phi(a_i,x) \wedge
\neg\tilde\phi(a_{i'},x)$ is consistent, but $\varphi(\bar a,x)
\wedge \neg\tilde\phi(a_i,x) \wedge \tilde\phi(a_{i'},x)$ is
inconsistent. Thus the formula $\psi(x,y) = \varphi(\bar a,x)
\wedge \tilde\phi(y,x)$ has the strict order property.

(ii) $\Rightarrow$ (i): Suppose that the formula $\phi(x,y)$ has
SOP as witnessed by a sequence $(a_nb_n:n<\omega)$. Then the
formula $\psi(y_1,y_2)=\forall x(\phi(x,y_1)\to\phi(x,y_2))$
defines a continuous pre-order for which the sequence
$(a_n:n<\omega)$ forms an infinite chain. Replace
$(a_n)_{n<\omega}$ by an indiscernible sequence
$(c_n)_{n<\omega}$, and return to $\phi(x,y)$. Therefore,
$\phi(x,y)$ has SOP as witnessed by the sequence
$(c_nb_n:n<\omega)$. Now, $\phi(S_\phi({\mathcal
U}),c_n)\nearrow\varphi$ but $\varphi$ is not continuous.
\end{proof}

We now provide a proof of Shelah's theorem (\cite{Sh},
Theorem~4.1).

\begin{cor} \label{Shelah-continuous}  Suppose that $T$ is NIP and NSOP. Then $T$
is stable.
\end{cor}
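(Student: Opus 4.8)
The plan is to establish the contrapositive in the sharper form ``if $T$ is unstable and NIP then $T$ has SOP'' --- since NIP rules out IP, this is equivalent to the stated corollary. So suppose some formula $\phi(x,y)$ is unstable in $T$. By Definition~\ref{stab2} there are $r>s$ and, in some model of $T$ (hence in the monster model $\mathcal U$), elements $a_i,b_i$ ($i<\omega$) with $\phi(a_i,b_j)\geqslant r$ whenever $i>j$ and $\phi(a_i,b_j)\leqslant s$ whenever $i<j$. First I would apply Ramsey's theorem together with compactness in the usual way to replace $(a_ib_i)_{i<\omega}$ by an indiscernible sequence in $\mathcal U$ still satisfying these two inequalities.

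Next I would extract a convergent sequence of functions. Set $B=\{b_i:i<\omega\}$ and view each $b_i$ as the continuous function $\phi(x,b_i)\colon S_{\phi(x)}(B)\to\mathbb R$, uniformly bounded by $\flat_\phi$. Since $\phi$ is NIP, the set $F=\{\phi(x,b):b\in B\}$ contains no independent subsequence --- such a subsequence would, by Proposition~\ref{key}, realize its shattering in an elementary extension and hence witness IP of $\phi$ --- so by Fact~\ref{NIP-convergence} every sequence in $F$, in particular $(\phi(x,b_i))_i$, has a pointwise convergent subsequence on the compact space $S_{\phi(x)}(B)$. Passing to such a subsequence and relabelling (the subsequence of $(a_ib_i)$ is again indiscernible and again satisfies the two inequalities, with $B$ now the smaller parameter set), I may assume $\phi(x,b_i)\to g$ pointwise on $S_{\phi(x)}(B)$ for some $g$.

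I would then show $g$ is not continuous. Let $p_i=\mathrm{tp}_{\phi(x)}(a_i/B)\in S_{\phi(x)}(B)$, so that $g(p_i)=\lim_n\phi(a_i,b_n)$; as $\phi(a_i,b_n)\leqslant s$ for all $n>i$, this gives $g(p_i)\leqslant s$ for every $i$. Let $p$ be a cluster point of $(p_i)_i$ in the compact space $S_{\phi(x)}(B)$. For each fixed $n$, continuity of $\phi(x,b_n)$ forces $\phi(p,b_n)$ to be a cluster value of $(\phi(a_i,b_n))_i$, and since $\phi(a_i,b_n)\geqslant r$ for all $i>n$ we get $\phi(p,b_n)\geqslant r$; hence $g(p)=\lim_n\phi(p,b_n)\geqslant r>s$. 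If $g$ were continuous, $g(p)$ would be a cluster value of the sequence $(g(p_i))_i$, all of whose terms are $\leqslant s$ --- impossible. Thus $(b_i)_{i<\omega}$ is an indiscernible sequence for which $(\phi(x,b_i))_i$ converges but its limit is discontinuous, and by the proposition immediately preceding this corollary, $T$ has SOP. By contraposition, a NIP and NSOP theory is stable.

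The genuinely routine parts are the Ramsey/compactness extraction of the indiscernible witness and the cluster-point bookkeeping. The step I expect to need the most care is matching the constructed data to the hypothesis of the preceding proposition --- specifically, checking that a discontinuous pointwise limit of the $\phi(x,b_i)$ over the \emph{set} $B$ (rather than over a model, or over $\mathcal U$) still triggers the SOP conclusion there; this works because that conclusion is drawn from a finite combinatorial configuration among the $b_i$ together with a realization of $p$, and such a configuration is unaffected by enlarging the ambient type space. One could instead route through Theorem~\ref{nip+scp=stable}: NIP yields the RSC for $\{\phi(x,b):b\in M\}$, so instability forces the failure of the SCP, from which the indiscernible discontinuity witness is then extracted; the bookkeeping is essentially the same.
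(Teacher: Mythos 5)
Your proof is correct and is essentially the contrapositive of the paper's own argument: the paper shows directly that NIP (giving a pointwise convergent subsequence of the $\phi(x,b_n)$) together with NSOP (giving continuity of such limits along indiscernible sequences, via the proposition immediately preceding the corollary) forces the two iterated limits $\lim_m\lim_n\phi(a_m,b_n)$ and $\lim_n\lim_m\phi(a_m,b_n)$ to agree, and then concludes stability from Grothendieck's criterion --- precisely the two ingredients you use, read in reverse. Your version makes explicit the Ramsey extraction of the indiscernible witness and the cluster-point computation showing the limit is discontinuous (including the point that discontinuity over the parameter set $B$ lifts through the restriction map to the full type space), all of which the paper leaves implicit, but the decomposition and the key lemmas are the same.
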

\begin{proof} Let $\phi(x,y)$ be a formula, $(a_n)_{n<\omega}$ an
indiscernible sequence, and $(b_n)_{n<\omega}$ an arbitrary
sequence. Suppose that the double limits
$\lim_m\lim_n\phi(b_n,a_m)$ and $\lim_n\lim_m\phi(b_n,a_m)$ exist.
By NIP, there is a convergent subsequence $\phi(x,a_{m_k})$ such
that $\phi(x,a_{m_k})\to\psi(x)$ on $S_\phi(\mathcal{U})$.
Therefore, $\lim_n\lim_k\phi(b_n,a_{m_k})=\lim_n\psi(b_n)$ and
$\lim_k\lim_n\phi(b_n,a_{m_k})=\lim_k\phi(b,a_{m_k})=\psi(b)$
where $b$ is a cluster point of $\{b_n\}$. By NSOP,
$\lim_n\psi(b_n)=\psi(b)$. So the double limits are the same and
thus $T$ is stable. (Compare Theorem~\ref{nip+scp=stable}.)
\end{proof}

\subsection*{Theorems of Eberlein-\v{S}mulian and Shelah}
The well known compactness theorem of Eberlein and \v{S}mulian
says that relative compactness, relative sequential compactness
and relative countable compactness are equivalent for the weak
topology of a Banach space. Now, we show the correspondence
between Shelah's theorem and the Eberlein-\v{S}mulian theorem.

\begin{proposition} \label{Shelah=Eberlein}
Suppose that $X$ is a space of the form $S_\phi(M)$ and
$A=\{\phi(a,y):a\in M\}$ where $M$ is a sufficiently saturated
model of a theory $T$ and $\phi(x,y)$ a formula. Then the
following are equivalent.
\begin{itemize}
  \item [{\em (i)}] {\bf The Eberlein-\v{S}mulian theorem:}
  For every $A\subseteq C(X)$, the following statements are equivalent:
    \begin{itemize}
        \item [{\em (a)}]  The weak closure of $A$ is weakly compact in $C(X)$.
        \item [{\em (b)}] Each sequence of elements of $A$ has a subsequence that is weakly convergent in $C(X)$.
    \end{itemize}
 \item [{\em (ii)}] {\bf Shelah's theorem:} The following statements are  equivalent:
    \begin{itemize}
        \item [{\em (a$'$)}]  $T$ is stable.
        \item [{\em (b$'$)}]  $T$ has the NIP and the NSOP.
    \end{itemize}
\end{itemize}
\end{proposition}
\begin{proof} First, we note that by the Eberlein-Grothendieck criterion,
(a)~$\Leftrightarrow$~(a$'$).

It suffices to show that (b)~$\Leftrightarrow$~(b$'$). Suppose
that $(f_n)$ is a sequence of the form $(\phi(a_n,y))$ where
$(a_n)$ is an indiscernible sequence. By (b), there is a
subsequence $(f_{n_k})$ that is convergent. Therefore, $T$ has
NIP. Again by (b), its limit is continuous, so $T$ has NSOP, and
(b$'$) holds. Conversely, suppose that $T$ has NIP and NSOP. Let
$(f_n)$ be a sequence of the form $(\phi(a_n,y))$ where $(a_n)$
is an arbitrary sequence. By NIP, $(f_n)$ has a convergent
subsequence $(f_{n_k})$. Replace $(a_n)$ by an  indiscernible
sequence $(c_n)$. Then, by NSOP,  $f=\lim_kf_{n_k}$ is
continuous. So, (b) holds.
\end{proof}

To summarize:
$$\begin{array}{cccccc}
    \textrm{Logic:~~~~~~~~~} & \textrm{Stable} & \Longleftrightarrow & \textrm{NIP} & +  & \textrm{ NSOP} \\
    \textrm{ } &   &  &   &  &     \\
   \textrm{Analysis:~~~~~} & \textrm{Weakly Compact} & \Longleftrightarrow & \textrm{RSC} & + & \textrm{SCP}
\end{array}$$

\medskip
Of course, the Eberlein-\v{S}mulian theorem is proved for
arbitrary Banach spaces (even normed spaces), but it follows
easily from the case $C(X)$ (see \cite{Fremlin4},  Theorem~462D).
On the other hand, the above argument implicitly shows that
countable  compactness implies compactness.

Earlier  we defined angelic topological spaces. Roughly an angelic
space is one for which the conclusions of the Eberlein-\v{S}mulian
theorem hold. By the previous observations one can say that
`first order logic is angelic.'

\noindent\hrulefill


\end{document}